\documentclass[journal,twoside,web]{ieeecolor}

\usepackage{generic}
\usepackage{amsmath,amssymb,amsfonts}
\usepackage{mathtools}
\usepackage{graphicx}
\usepackage{stfloats}
\usepackage{color}
\usepackage{booktabs}
\usepackage{multirow}
\usepackage{algorithm}
\usepackage{algpseudocode}
\usepackage{url}
\usepackage{etoolbox}

\makeatletter
\@ifundefined{color@begingroup}%
  {\let\color@begingroup\relax
   \let\color@endgroup\relax}{}%

\def\fix@ieeecolor@hbox#1{%
  \hbox{\color@begingroup#1\color@endgroup}}

\patchcmd{\@makecaption}
  {\hbox}
  {\fix@ieeecolor@hbox}
  {}
  {\FAILED}

\patchcmd{\@makecaption}
  {\hbox}
  {\fix@ieeecolor@hbox}
  {}
  {\FAILED}
\makeatother

\newtheorem{assumption}{\bf Assumption}
\newtheorem{proposition}{\bf Proposition}
\newtheorem{theorem}{\bf Theorem}
\newtheorem{remark}{\bf Remark}

\newtheorem{lemma}{\bf Lemma}
\newtheorem{corollary}{\bf Corollary}

\begin{document}

\title{Reputation-Based Byzantine-Resilient Vector Consensus without Prespecified Fault Bounds}

\author{
Rui Huang, Changxin Liu,~\IEEEmembership{Member,~IEEE}, and Yang Shi,~\IEEEmembership{Fellow,~IEEE}
\thanks{Rui Huang is with the School of Aeronautical, Automotive, Chemical and Materials Engineering, Loughborough University, Loughborough, Leicestershire, LE11 3TU, UK (e-mail: {\tt\small r.huang@lboro.ac.uk}).
Changxin Liu is with the Key Laboratory of Smart Manufacturing in Energy Chemical Process, Ministry of Education, East China University of Science and Technology, Shanghai 200237, China (e-mail: {\tt\small changxinl@ecust.edu.cn}).
Yang Shi is with the Department of Mechanical Engineering, University of Victoria, Victoria, BC V8W 3P6, Canada (e-mail: {\tt\small yshi@uvic.ca}).
}
}

\maketitle

\begin{abstract}
    This paper proposes an adaptive reputation-based algorithm for Byzantine-resilient vector consensus in distributed multi-agent systems.
    The proposed algorithm does not require a prespecified bound on the number of Byzantine neighbors as an algorithmic input.
    Instead, each agent evaluates neighboring behavior from observed state discrepancies and updates reputation weights using accumulated behavioral evidence.
    The reputation mechanism adaptively redistributes communication weights and can assign exactly zero weight to sufficiently inconsistent neighbors.
    These reputation weights are incorporated into the local consensus updates to suppress adversarial influence while preserving reliable neighbor interactions.
    The resulting algorithm couples reputation adaptation with consensus dynamics while supporting high-dimensional vector states with low per-agent computational complexity.
    Extensive experiments on distributed systems against classical resilient consensus methods demonstrate improved Byzantine detection accuracy and more reliable consensus performance across different attack scenarios and state dimensions.
\end{abstract}

\begin{IEEEkeywords}
    Byzantine Fault Tolerance, Resilient Consensus, Adaptive Reputation, Distributed Systems
\end{IEEEkeywords}

\section{Introduction}
\label{sec:introduction}

Over the past decade, advances in communication technologies and low-cost embedded devices have driven a shift from centralized architectures to distributed, networked systems, where computation and decision-making are carried out through local interactions among agents \cite{olfati2007consensus}.
A fundamental task in such systems is consensus, in which agents iteratively exchange information with neighbors to reach agreement on a common value, enabling scalability, reduced communication overhead, and robustness to failures in applications such as multi-robot coordination \cite{jadbabaie2003coordination}, sensor networks \cite{yu2009distributed}, distributed optimization \cite{nedic2009distributed}, and distributed game-theoretic decision making \cite{feng2025quantized}.

However, in real-world cyber-physical systems, agents may be subject to faults, attacks, or malicious compromises \cite{lamport1982byzantine,liu2024survey}, where misbehaving nodes can manipulate states or transmit deceptive information, potentially leading to unsafe system behavior in applications such as robotic swarms \cite{abbas2017improving,park2017fault}, distributed optimization \cite{su2016multi,sundaram2015consensus}, and learning \cite{liu2026byzantine}.
This challenge has led to the study of resilient consensus, which seeks to guarantee that all honest agents asymptotically agree despite adversarial nodes with unknown identities, while preserving safety or validity, i.e., ensuring that their states remain within a region determined by the initial honest states (e.g., the convex hull or interval).
A key difficulty in resilient consensus lies in the fact that honest agents cannot distinguish adversarial neighbors from honest ones, making it nontrivial to design update rules that simultaneously ensure agreement and safety.

Related work on resilient consensus spans trimming-based, geometric, and adaptive weighting approaches.
Recent surveys provide comprehensive overviews of resilient consensus and graph-theoretic approaches for distributed systems under adversarial conditions \cite{pirani2023graph}.
Early approximate agreement algorithms for real-valued states relied on iteratively removing extreme values to guarantee convergence under Byzantine faults \cite{dolev1986reaching}.
This idea was later extended to general directed networks through the W-MSR algorithm, which introduced the notion of network robustness to characterize the topological conditions required for local, fault-tolerant consensus \cite{leblanc2013resilient,tseng2013iterative}.
MSR-based resilient consensus has also been extended to event-triggered communication to reduce communication overhead \cite{wang2019resilient}.
Resilient consensus becomes substantially more challenging when agent states are vectors.
Early studies of Byzantine vector consensus established fundamental feasibility and resilience conditions for multidimensional agreement in complete networks \cite{vaidya2013byzantine,mendes2015multidimensional}, and were subsequently extended to incomplete communication graphs \cite{vaidya2014iterative}.
Unlike scalar consensus, independently applying scalar resilient updates to individual coordinates does not generally preserve the required convex-hull validity in the vector space.
To address this issue, geometric approaches construct safe points that remain within the convex hull determined by honest states.
Tverberg-partition-based constructions have been employed for fault-tolerant vector consensus and multi-robot rendezvous \cite{park2017fault}, while centerpoint-based methods provide an alternative geometric characterization with improved resilience guarantees \cite{abbas2022resilient}.
More recently, reputation-based mechanisms have been explored to dynamically downweight suspicious neighbors and achieve resilience under both synchronous and asynchronous communications \cite{ramos2023discrete}.
Although the analysis in \cite{ramos2023discrete} focuses on scalar-valued states, reputation-based weighting provides a natural mechanism for assigning scalar reliability scores to neighboring agents.
Related trust-based approaches exploit auxiliary stochastic observations to classify neighboring agents and construct trusted neighborhoods for resilient consensus \cite{akgun2025multi}.
Such approaches, however, rely on side information whose statistical properties distinguish legitimate from malicious communications.
Reinforcement-learning-based approaches have also been explored to adapt neighbor weights from interaction data, where rewards constructed from relative state differences are used to estimate neighbor credibility over time \cite{hou2021reinforcement,hou2023resilient}.
Recent works in federated learning have further investigated robust aggregation rules such as coordinate-wise median \cite{cambus2025coordinate} to mitigate Byzantine effects in high-dimensional settings.
Such federated-learning approaches commonly rely on centralized or all-to-all communication architectures, where global aggregation is available at each iteration.

Despite these advances, existing resilient consensus methods still face several limitations that restrict their applicability to general distributed systems:

\begin{enumerate}
    \item
          \textbf{Scalability to high-dimensional states.}
          Many classical resilient consensus methods are primarily designed for scalar states.
          Trimming-based approaches such as W-MSR do not directly extend to vector-valued states without additional geometric considerations, while coordinate-wise filtering may fail to preserve the desired geometric consistency.
          Geometric methods based on safe points or centerpoints provide principled extensions to vector states \cite{abbas2022resilient}, but their computational cost and fault-tolerance requirements generally become less favorable as the state dimension increases.
          This can limit their applicability to high-dimensional distributed tasks.

    \item
          \textbf{Dependence on structural assumptions and prespecified fault bounds.}
          Existing resilient consensus methods often rely on strong conditions on the communication graph or prior knowledge of the adversary model.
          Trimming-based methods typically require sufficient network redundancy, commonly characterized through robustness or degree conditions.
          Moreover, some filtering and adaptive-weighting schemes require a
          prespecified upper bound $f$ on the number of Byzantine neighbors
          \cite{ramos2023discrete}.
          Such prior knowledge may be unavailable in practice and can lead to
          unnecessarily conservative filtering when the prescribed bound exceeds
          the actual number of adversaries.
          A related tension has recently been established in Byzantine-robust distributed learning, where increasing the prescribed fault tolerance was shown to inevitably degrade the worst-case aggregation accuracy even in the absence of Byzantine workers \cite{yang2025tension}.

    \item
          \textbf{Limited use of behavioral history.}
          Many resilient consensus methods determine neighbor influence mainly from current observations or predefined filtering rules.
          Even adaptive-weighting approaches often update scalar trust values through prescribed mechanisms without explicitly exploiting accumulated behavioral consistency over time.
          As a result, persistent differences between reliable and unreliable communication behaviors may not be fully utilized when adjusting neighbor influence.
\end{enumerate}

These limitations motivate the need for a more adaptive and scalable algorithm that can handle high-dimensional states, reduce reliance on strong prior assumptions, and mitigate adversarial influence through behavior-driven reputation adaptation.

To address these limitations, we propose an adaptive reputation-based algorithm for Byzantine-resilient vector consensus.
Rather than relying on fault-bound-dependent filtering or computationally intensive geometric aggregation, the proposed method adapts neighbor influence directly from accumulated behavioral evidence.
Each agent evaluates neighboring behavior using an outlier-robust discrepancy measure and accumulates the resulting evidence over time.
The accumulated behavioral information is then used to adaptively update neighbor reputations, allowing sufficiently inconsistent neighbors to receive zero communication weight.
These reputations are incorporated into the local consensus updates to attenuate unreliable influences while preserving interactions with behaviorally consistent neighbors.
As the consensus process evolves, the reputation mechanism continuously adjusts communication weights according to the observed behavior of neighboring agents.
The resulting algorithm tightly couples reputation adaptation with consensus dynamics while supporting high-dimensional vector states with low per-agent computational complexity.

The main contributions of this paper are threefold:
\begin{enumerate}
    \item
          \textbf{Dimension-robust vector consensus with low complexity.}
          The proposed method operates directly on vector-valued states and avoids computationally expensive geometric constructions such as centerpoint computation.
          Neighbor assessment is reduced to scalar reputation updates derived from vector discrepancies, keeping the additional per-agent computation lightweight as the state dimension increases.
          This enables scalable resilient consensus over high-dimensional states while operating on the original communication graph.

    \item
          \textbf{No prespecified Byzantine fault bound.}
          We develop a reputation-based consensus mechanism that does not require a prespecified upper bound on the number of Byzantine neighbors as an algorithmic input.
          Instead of determining resilient updates from a known fault budget, each agent adjusts neighbor influence directly from observed behavioral evidence.
          This reduces reliance on prior adversary information while retaining a fully distributed implementation.

    \item
          \textbf{Reputation-based Byzantine behavior discrimination.}
          In addition to regulating neighbor influence during consensus, the observed reputations provide an interpretable measure of neighbor reliability.
          Persistent behavioral inconsistencies are reflected by reduced reputation values, enabling honest agents to distinguish unreliable neighbors from behaviorally consistent ones.
          The resulting reputation information can further support higher-level actions such as anomaly monitoring, neighbor isolation, or communication reconfiguration.
\end{enumerate}

The rest of the paper is organized as follows.
Section~\ref{sec:problem_formulation} formalizes the problem of Byzantine-resilient consensus in distributed networks.
Section~\ref{sec:proposed_algorithm} introduces the proposed algorithm, detailing its design and implementation.
Section~\ref{sec:analysis} establishes the convergence and resilience properties of the proposed algorithm under Byzantine attacks.
Section~\ref{sec:experiments} presents a range of distributed systems experiments, demonstrating the superior performance of the proposed method over classical resilient consensus approaches.
Finally, Section~\ref{sec:conclusion} concludes the paper and discusses potential future research directions.

\section{Preliminaries and Problem Formulation}
\label{sec:problem_formulation}

\subsection{Notation}

We use $\mathbb{R}$ to denote the set of real numbers, $\mathbb{R}^d$ to denote the $d$-dimensional Euclidean space, and $\mathbb{R}^{n \times d}$ to denote the set of $n \times d$ real matrices.
For a vector $x \in \mathbb{R}^d$, we denote its $k$-th entry by $(x)_k$.
For a set $S$, we denote its cardinality by $|S|$.
For a vector $x \in \mathbb{R}^d$, we denote its $\ell_p$-norm by $\|x\|_p$ for $p \ge 1$, and in particular $\|x\| = \|x\|_2$.
For a matrix $A \in \mathbb{R}^{n \times d}$, we denote its Frobenius norm by $\|A\|_F$ and its induced norms by $\|A\|_p$.
Similar to the vector case, we denote $\|A\| = \|A\|_2$.
For a set $S$ and a point $x$, we denote the infinity distance from $x$ to $S$ by $\operatorname{dist}_\infty(x,S) = \inf_{y \in S} \|x - y\|_\infty$.
We denote $I_d$ as the $d \times d$ identity matrix, and $\mathbf{1}_d$ as the $d$-dimensional vector of all ones.
If the dimension is clear from the context, we simply write $I$ and $\mathbf{1}$.
The indicator function is denoted by $\mathbb{I}\{\cdot\}$, which equals $1$ if the condition is true and $0$ otherwise.

\subsection{Consensus in the Presence of Malicious Nodes}

We consider a network of $n$ nodes represented by an undirected connected graph
$\mathcal{G}=(\mathcal{V},\mathcal{E})$, where
$\mathcal{V}=\{1,2,\dots,n\}$ and an edge $(i,j)\in\mathcal{E}$ indicates that nodes $i$ and $j$ can directly exchange information.
Each node $i$ maintains a state $x_i^{(t)}\in\mathbb{R}^d$ at discrete time $t$, and its neighbor set is denoted by
$\mathcal{N}_i=\{j\in\mathcal{V}\mid (i,j)\in\mathcal{E}\}$.

In the standard distributed consensus problem, the goal is to design a local update rule such that
\begin{equation*}
    \lim_{t\to\infty}\|x_i^{(t)}-x_j^{(t)}\|=0,
    \quad \forall i,j\in\mathcal{V},
\end{equation*}
using only information from neighboring nodes.
A typical consensus update is given by \cite{olfati2007consensus}
\begin{equation*}
    x_i^{(t+1)}
    =
    (1-\alpha)x_i^{(t)}
    +
    \alpha \frac{1}{|\mathcal{N}_i|}
    \sum_{j\in\mathcal{N}_i} x_j^{(t)},
\end{equation*}
where $\alpha\in(0,1]$ controls the influence of neighboring states.
This update forms a convex combination of node $i$'s current state and the average state of its neighbors, and achieves consensus under standard connectivity.

However, such averaging is vulnerable to Byzantine behavior.
Malicious nodes may send arbitrary or inconsistent messages to their neighbors, thereby distorting local averages and preventing honest nodes from reaching consensus.
We therefore partition the node set as $\mathcal{V}=\mathcal{H}\cup\mathcal{B}$, where $\mathcal{H}$ and $\mathcal{B}$ denote the honest and malicious nodes, respectively.
Honest nodes follow the prescribed protocol, while malicious nodes may behave arbitrarily.
The resilient consensus objective is to ensure
\begin{equation*}
    \lim_{t\to\infty}\|x_i^{(t)}-x_j^{(t)}\|=0,
    \quad \forall i,j\in\mathcal{H}.
\end{equation*}

\subsection{Attacker Model}

We consider a Byzantine adversarial model on a fixed communication graph.
Let $\mathcal{B}\subseteq\mathcal{V}$ denote the set of Byzantine nodes and $\mathcal{H}=\mathcal{V}\setminus\mathcal{B}$ the set of honest nodes.
Honest nodes follow the prescribed consensus protocol, whereas Byzantine nodes may arbitrarily update their internal states and send arbitrary, possibly inconsistent, values to different neighbors.
We focus on value-based attacks and assume reliable communication links, so that each node receives one message from each neighbor at every iteration.

We impose the following assumptions for resilience.

\begin{assumption}
    \label{assump:majority_honest}
    Each honest node has a strict majority of honest neighbors:
    \begin{equation*}
        |\mathcal{N}_i \cap \mathcal{H}| > |\mathcal{N}_i \cap \mathcal{B}|, \quad \forall i \in \mathcal{H}.
    \end{equation*}
\end{assumption}

\begin{assumption}
    \label{assump:honest_connected}
    The subgraph induced by $\mathcal{H}$ is connected.
\end{assumption}

For later analysis, define the stacked honest state as
\begin{equation*}
    X_{\mathcal H}^{(t)}
    \coloneqq
    (x_i^{(t)})_{i\in\mathcal H}.
\end{equation*}
Since a Byzantine node may transmit different values to different neighbors, let
\begin{equation*}
    \mathcal E_{\mathcal B\to\mathcal H}
    \coloneqq
    \left\{
    (j,i)
    :
    i\in\mathcal H,\
    j\in\mathcal N_i\cap\mathcal B
    \right\}
\end{equation*}
denote the set of Byzantine-to-honest communication links.
Let
\begin{equation*}
    m_{\mathcal B\to\mathcal H}
    \coloneqq
    |\mathcal E_{\mathcal B\to\mathcal H}|
\end{equation*}
denote the number of Byzantine-to-honest communication links.
Define the stacked Byzantine messages as
\begin{equation*}
    Z_{\mathcal B}^{(t)}
    \coloneqq
    \left(
    x_{j\to i}^{(t)}
    \right)_{(j,i)\in\mathcal E_{\mathcal B\to\mathcal H}},
\end{equation*}
under any fixed ordering of $\mathcal E_{\mathcal B\to\mathcal H}$.
We then define the stacked communicated state relevant to the honest-node dynamics as
\begin{equation*}
    \Xi^{(t)}
    \coloneqq
    \begin{bmatrix}
        X_{\mathcal H}^{(t)}
        \\
        Z_{\mathcal B}^{(t)}
    \end{bmatrix}.
\end{equation*}
Let
\begin{equation*}
    n_{\Xi}
    \coloneqq
    |\mathcal H|
    +
    m_{\mathcal B\to\mathcal H}.
\end{equation*}
For any time-independent function $f$ of these communicated states, we write
\begin{equation*}
    f^{(t)}
    \coloneqq
    f\left(\Xi^{(t)}\right).
\end{equation*}

\section{Proposed Algorithm}
\label{sec:proposed_algorithm}

In this section, we present the \textit{Adaptive Reputation-based Consensus} (A-RepC) algorithm, which jointly targets fault-tolerant consensus and long-term neighbor reputation learning.

Each node evaluates its neighbors using an outlier-robust discrepancy measure and updates their reputations by balancing historical reliability (exploitation) with diversity for long-term learning (exploration).
The resulting reputations define convex consensus weights that downweight unreliable influences and promote consensus among honest nodes.
This forms a positive feedback loop: better consensus makes Byzantine neighbors easier to identify, while more accurate identification further strengthens consensus.
The full procedure is summarized in Algorithm~\ref{alg:arepc}.

\begin{algorithm}[hbt!]
    \caption{A-RepC}
    \label{alg:arepc}
    \begin{algorithmic}[1]
        \Require Initial state $x_i^{(0)}$, inverse temperature $\eta$, forgetting factor $\lambda$, step-size $\alpha$, total iterations $T$.
        \Ensure Final state $x_i^{(T)}$, reputations $\{p_{ij}^{(T)}\}_{j\in\mathcal{N}_i}$.

        \State Initialize $L_{ij}^{(-1)} \gets 0$ for all $i\in\mathcal{H}$ and $j\in\mathcal{N}_i$

        \For{$t=0,1,\ldots,T-1$}
        \For{each honest node $i$ in parallel}
        \State Exchange states to obtain $\{x_{j\to i}^{(t)}\}_{j\in\mathcal{N}_i}$
        \ForAll{$j\in\mathcal{N}_i$}
        \State $l_{ij}^{(t)} \gets \|x_{j\to i}^{(t)}-\mathrm{cm}_i^{(t)}\|_\infty$
        \State $L_{ij}^{(t)} \gets \lambda L_{ij}^{(t-1)} + l_{ij}^{(t)}$
        \EndFor
        \State $\{p_{ij}^{(t)}\}_{j\in\mathcal{N}_i}
            \gets
            \operatorname{sparsemax}\!\left(\{-\eta L_{ij}^{(t)}\}_{j\in\mathcal{N}_i}\right)$
        \State $x_i^{(t+1)}
            \gets
            (1-\alpha)x_i^{(t)}
            +
            \alpha\sum_{j\in\mathcal{N}_i}p_{ij}^{(t)}x_{j\to i}^{(t)}$
        \EndFor
        \EndFor
    \end{algorithmic}
\end{algorithm}

In Algorithm~\ref{alg:arepc}, the coordinate-wise median of the received neighbor states serves as an outlier-robust reference:
\begin{equation*}
    (\mathrm{cm}_i^{(t)})_k
    =
    \operatorname{median}\!\left(
    \{(x_{j\to i}^{(t)})_k \mid j \in \mathcal{N}_i\}
    \right),
    \quad
    k=1,\dots,d.
\end{equation*}
The sparsemax operator \cite{martins2016softmax} maps discrepancy-based scores to normalized neighbor weights on the probability simplex:
\begin{equation*}
    \operatorname{sparsemax}(z)
    =
    \arg\max_{p \in \Delta^{K-1}}
    \; p^\top z - \frac{1}{2}\|p\|^2,
\end{equation*}
where $\Delta^{K-1}=\{p\in\mathbb{R}^K\mid p\ge0,\sum_{i=1}^K p_i=1\}$.
This encourages weights to align with the scores while avoiding excessive concentration.
For implementation, we next recall a closed-form characterization of sparsemax.

\begin{proposition}[{\cite[Proposition 1]{martins2016softmax}}]
    \label{prop:sparsemax}
    For any $z \in \mathbb{R}^K$, the sparsemax mapping admits the closed-form solution
    \begin{equation*}
        (p)_i = \max\{(z)_i - \tau(z),\, 0\}, \quad i = 1,\dots,K,
    \end{equation*}
    where the threshold $\tau(z)$ is chosen such that
    \begin{equation*}
        \sum_{i=1}^K \max\{(z)_i - \tau(z),\, 0\} = 1.
    \end{equation*}
    Moreover, let $(z)_1 \ge \cdots \ge (z)_K$ denote the sorted entries of $z$, and define
    \begin{equation*}
        k^\star = \max\left\{k \in \{1,\dots,K\} \;\middle|\;
        1 + k\, (z)_k > \sum_{j=1}^k (z)_j \right\}.
    \end{equation*}
    Then the threshold $\tau(z)$ admits the explicit form
    \begin{equation*}
        \tau(z) = \frac{\sum_{j=1}^{k^\star} (z)_j - 1}{k^\star}.
    \end{equation*}
\end{proposition}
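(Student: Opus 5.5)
The plan is to treat sparsemax as the Euclidean projection onto the simplex and read off its KKT conditions. Since $p^\top z-\tfrac12\|p\|^2=-\tfrac12\|p-z\|^2+\tfrac12\|z\|^2$, the sparsemax problem is equivalent to minimizing $\tfrac12\|p-z\|^2$ over $\Delta^{K-1}$. This is a strictly convex objective over a nonempty compact convex polytope, so a unique minimizer exists. Because the constraints are affine, Slater-type qualification holds automatically, and the KKT conditions are necessary and sufficient.

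First I will write the Lagrangian with multiplier $\tau$ for $\sum_i p_i=1$ and $\mu_i\ge0$ for $p_i\ge0$. The conditions become $p_i-z_i+\tau-\mu_i=0$, $\mu_i p_i=0$, $p_i\ge0$ and $\sum_i p_i=1$.

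Case analysis on each coordinate then gives the closed form.
\begin{itemize}
\item If $p_i>0$, then $\mu_i=0$ and $p_i=z_i-\tau$.
\item If $p_i=0$, then $\mu_i=\tau-z_i\ge0$, so $z_i\le\tau$.
\end{itemize}
In both cases $p_i=\max\{z_i-\tau,0\}$. The normalization then forces $\sum_i\max\{z_i-\tau,0\}=1$.

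The map $\tau\mapsto\sum_i\max\{z_i-\tau,0\}$ is continuous and nonincreasing. It is strictly decreasing wherever it is positive, tends to $+\infty$ as $\tau\to-\infty$, and equals $0$ for $\tau\ge\max_i z_i$. Hence the threshold $\tau(z)$ exists and is unique. Conversely, any $\tau$ solving the equation yields a KKT point, which is therefore the unique minimizer.

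For the explicit formula, sort $z_1\ge\cdots\ge z_K$. The support of $p$ is $S=\{i: z_i>\tau\}$, which is a prefix $\{1,\dots,k\}$ of the sorted order. On the support, $\sum_{j\le k}(z_j-\tau)=1$ gives $\tau=(\sum_{j\le k}z_j-1)/k$. It remains to show that $k$ equals $k^\star$; I expect this to be the main step.

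To identify the support size, define $\tau_k=(\sum_{j\le k}z_j-1)/k$. The condition $1+kz_k>\sum_{j\le k}z_j$ is exactly $z_k>\tau_k$. The true support size $k_0$ satisfies $z_{k_0}>\tau=\tau_{k_0}$, so $k_0\le k^\star$. For the reverse inequality, suppose some $k>k_0$ also satisfied $z_k>\tau_k$. Then $\sum_{j\le k}(z_j-\tau_k)=1$ with $z_k>\tau_k$ means every term $z_j-\tau_k$ for $j\le k$ is positive. Hence $g(\tau_k)=\sum_i\max\{z_i-\tau_k,0\}\ge\sum_{j\le k}(z_j-\tau_k)=1$, and the inequality is strict if any $z_j>\tau_k$ with $j>k$. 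But $g(\tau)=1$, so by strict monotonicity we get $\tau_k\ge\tau$. Then every $j\le k$ satisfies $z_j>\tau_k\ge\tau$, which puts $k>k_0$ indices strictly above $\tau$ and contradicts the definition of $k_0$.

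So $k_0=k^\star$, and the claimed form of $\tau(z)$ follows. Ties in $z$ do not affect the argument, because only strict inequalities define the support.
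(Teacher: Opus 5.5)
The paper gives no proof of its own here; it imports the result from \cite{martins2016softmax}, whose argument follows the same KKT route you take. Everything up to and including $k_0\le k^\star$ is correct. The reverse inequality $k^\star\le k_0$, which you flagged as the main step, has a sign error. From $g(\tau_k)\ge 1=g(\tau)$, and the fact that $g$ is nonincreasing and strictly decreasing where positive, the only valid conclusion is $\tau_k\le\tau$: if $\tau_k>\tau$, then $g(\tau_k)<g(\tau)=1$. You concluded $\tau_k\ge\tau$, so you used the monotonicity backwards. With the correct inequality, the final step ``$z_j>\tau_k\ge\tau$ for all $j\le k$'' fails, and no contradiction with the definition of $k_0$ is reached. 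In fact $\tau_k\le\tau$ holds for every $k>k_0$ regardless, so deriving it from your hypothesis carries no information.

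The missing ingredient is that indices outside the support satisfy $z_j\le\tau$. For $k>k_0$, using $\sum_{j\le k_0}z_j-1=k_0\tau$, you get
\[
k\tau_k=\sum_{j\le k}z_j-1=k_0\tau+\sum_{j=k_0+1}^{k}z_j\ge k_0z_k+(k-k_0)z_k=kz_k ,
\]
because $\tau\ge z_{k_0+1}\ge z_k$ and $z_j\ge z_k$ for $j\le k$. Hence $z_k\le\tau_k$, i.e.\ $1+kz_k\le\sum_{j\le k}z_j$. So no $k>k_0$ lies in the set defining $k^\star$, and $k^\star=k_0$.

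An equivalent route: $\phi(k)=1+kz_k-\sum_{j\le k}z_j$ satisfies $\phi(k+1)-\phi(k)=k(z_{k+1}-z_k)\le0$. When $k_0<K$, it also satisfies $\phi(k_0+1)=k_0(z_{k_0+1}-\tau)\le0$. Together these show the admissible $k$ are exactly $\{1,\dots,k_0\}$. With either replacement, your proof is complete.
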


\section{Analysis}
\label{sec:analysis}

In this section, we establish the convergence of A-RepC under Byzantine attacks.
The key idea is that adaptive reputation weighting induces a state-dependent truncation mechanism.
Adversarial neighbors with sufficiently large accumulated discrepancies are assigned zero weight, while any retained adversarial influence is constrained by the discounted honest-state dispersion.
Due to the forgetting mechanism, the effective truncation region depends on an exponentially discounted history of the honest-state dispersion.
As the honest states approach consensus, the discounted dispersion also vanishes, and the admissible influence of Byzantine messages correspondingly decreases.
We show that the influence of retained Byzantine states is uniformly controlled throughout the evolution, which leads to the convergence of A-RepC.

For the analysis, define the instantaneous honest-node diameter
\begin{equation*}
    \Delta_{\mathcal{H}}^{(t)}
    \coloneqq
    \max_{i,j \in \mathcal{H}}
    \|x_i^{(t)} - x_j^{(t)}\|_\infty .
\end{equation*}
To characterize the accumulated effect induced by the forgetting factor, we further define the discounted honest-node diameter as
\begin{equation*}
    \widetilde{\Delta}_{\mathcal{H}}^{(t)}
    \coloneqq
    \lambda \widetilde{\Delta}_{\mathcal{H}}^{(t-1)}
    +
    \Delta_{\mathcal{H}}^{(t)},
    \qquad
    \widetilde{\Delta}_{\mathcal{H}}^{(-1)} = 0,
\end{equation*}
or equivalently,
\begin{equation*}
    \widetilde{\Delta}_{\mathcal{H}}^{(t)}
    =
    \sum_{s=0}^{t}
    \lambda^{t-s}
    \Delta_{\mathcal{H}}^{(s)}.
\end{equation*}
For each $i\in\mathcal H$, let
\begin{equation*}
    d_i^{\mathcal H}
    \coloneqq
    |\mathcal N_i\cap\mathcal H|,
\end{equation*}
and define
\begin{equation*}
    d_{\min}^{\mathcal H}
    \coloneqq
    \min_{i\in\mathcal H} d_i^{\mathcal H},
    \qquad
    d_{\max}^{\mathcal H}
    \coloneqq
    \max_{i\in\mathcal H} d_i^{\mathcal H}.
\end{equation*}

\begin{lemma}
    \label{lem:honest_loss_bound}
    Suppose that Assumption~\ref{assump:majority_honest} holds.
    Then, for every honest node $i \in \mathcal{H}$, every honest neighbor $j \in \mathcal{N}_i \cap \mathcal{H}$, and all $t \ge 0$,
    \begin{equation*}
        \|x_j^{(t)} - \mathrm{cm}_i^{(t)}\|_\infty
        \leq
        \Delta_{\mathcal{H}}^{(t)}.
    \end{equation*}
\end{lemma}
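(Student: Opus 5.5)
Since the $\ell_\infty$ norm is a maximum over coordinates, the plan is to reduce the claim to a scalar statement for each coordinate $k\in\{1,\dots,d\}$. Fix an honest node $i$, a time $t$, and a coordinate $k$. Define the honest coordinate range
\begin{equation*}
m_k \coloneqq \min_{l\in\mathcal H}(x_l^{(t)})_k,\qquad M_k \coloneqq \max_{l\in\mathcal H}(x_l^{(t)})_k .
\end{equation*}
By definition, $M_k-m_k\le \Delta_{\mathcal H}^{(t)}$. It therefore suffices to show that $(\mathrm{cm}_i^{(t)})_k\in[m_k,M_k]$. Indeed, any honest neighbor $j\in\mathcal N_i\cap\mathcal H$ has $(x_j^{(t)})_k\in[m_k,M_k]$, and honest nodes send their true states, so $x_{j\to i}^{(t)}=x_j^{(t)}$. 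Two points in the same interval differ by at most its length, giving $|(x_j^{(t)})_k-(\mathrm{cm}_i^{(t)})_k|\le M_k-m_k\le\Delta_{\mathcal H}^{(t)}$. Taking the maximum over $k$ then yields the lemma.

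The main step is the claim that the coordinate-wise median lies within the honest range. This is where Assumption~\ref{assump:majority_honest} enters. Let $h=|\mathcal N_i\cap\mathcal H|$ and $b=|\mathcal N_i\cap\mathcal B|$ with $h>b$, and $K=h+b=|\mathcal N_i|$. Among the $K$ received scalars $(x_{j\to i}^{(t)})_k$, at most $b<K/2$ can lie strictly above $M_k$, because only Byzantine values can. Likewise, at most $b<K/2$ lie strictly below $m_k$.

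I will fix the median convention: for $K$ odd it is the middle order statistic, and for $K$ even it is the average of the two middle ones. This is the step needing care. Sort the received values as $v_{(1)}\le\cdots\le v_{(K)}$. Since at most $b$ values exceed $M_k$, we get $v_{(K-b)}\le M_k$. Since at most $b$ values are below $m_k$, we get $v_{(b+1)}\ge m_k$. Because $h\ge b+1$, we have $K\ge 2b+1$, so $b+1\le K-b$.

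The middle index or indices, $\lceil K/2\rceil$ and $\lfloor K/2\rfloor+1$, satisfy $b+1\le\lceil K/2\rceil\le\lfloor K/2\rfloor+1\le K-b$. This holds because $b<K/2$ gives $b+1\le\lceil K/2\rceil$ and $b\le\lceil K/2\rceil-1$, so $\lfloor K/2\rfloor+1\le K-b$. Hence each middle order statistic lies in $[v_{(b+1)},v_{(K-b)}]\subseteq[m_k,M_k]$. Any average of them, for the even case, lies in this interval too. That completes the claim.

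I expect the only real obstacle to be this index bookkeeping for the even-$K$ median. Strict majority, $h\ge b+1$, is exactly what guarantees both middle order statistics are sandwiched by honest-range bounds. The argument is otherwise independent of $\lambda$, $\eta$, and the reputation weights. It holds pointwise at every $t$ regardless of what Byzantine nodes send, including inconsistent messages to different neighbors.
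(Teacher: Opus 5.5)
Your proof is correct and follows essentially the same route as the paper: work coordinate by coordinate, show that the coordinate-wise median is sandwiched between honest values, and bound the resulting gap by $\Delta_{\mathcal H}^{(t)}$. The differences are cosmetic. The paper sandwiches the median between the extremes over the honest neighbors $\mathcal N_i\cap\mathcal H$, whereas you use the (wider but still sufficient) range over all of $\mathcal H$; and your order-statistic bookkeeping, including the even-$|\mathcal N_i|$ case, spells out the justification that the paper compresses into ``by the definition of the coordinate-wise median'' under Assumption~\ref{assump:majority_honest}.
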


\begin{proof}
    By the definition of the coordinate-wise median, for each coordinate
    $\ell=1,\dots,d$,
    \begin{equation*}
        \min_{j \in \mathcal{N}_i \cap \mathcal{H}}
        (x_j^{(t)})_\ell
        \leq
        (\mathrm{cm}_i^{(t)})_\ell
        \leq
        \max_{j \in \mathcal{N}_i \cap \mathcal{H}}
        (x_j^{(t)})_\ell.
    \end{equation*}
    Therefore, we have
    \begin{equation*}
        \|x_j^{(t)} - \mathrm{cm}_i^{(t)}\|_\infty
        \leq
        \max_{u,v \in \mathcal{N}_i \cap \mathcal{H}}
        \|x_u^{(t)} - x_v^{(t)}\|_\infty
        \leq
        \Delta_{\mathcal{H}}^{(t)}.
    \end{equation*}
\end{proof}

As a direct consequence of Lemma~\ref{lem:honest_loss_bound}, the accumulated loss of every honest neighbor $j \in \mathcal{N}_i \cap \mathcal{H}$ satisfies
\begin{equation}
    \label{eq:honest_accumulated_loss_bound}
    L_{ij}^{(t)}
    =
    \sum_{s=0}^{t}
    \lambda^{t-s}
    l_{ij}^{(s)}
    \leq
    \sum_{s=0}^{t}
    \lambda^{t-s}
    \Delta_{\mathcal{H}}^{(s)}
    =
    \widetilde{\Delta}_{\mathcal{H}}^{(t)}.
\end{equation}

\begin{lemma}
    \label{lem:byzantine_remove_bound}
    Suppose that Assumption~\ref{assump:majority_honest} holds and that $\lambda \in [0,1)$.
    For every honest node $i \in \mathcal{H}$ and every neighbor $j \in \mathcal{N}_i$, if $p_{ij}^{(t)} > 0$, then
    \begin{equation*}
        L_{ij}^{(t)}
        <
        \widetilde{\Delta}_{\mathcal{H}}^{(t)}
        +
        \frac{1}{\eta d_i^{\mathcal{H}}}.
    \end{equation*}
\end{lemma}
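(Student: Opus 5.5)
The plan is to combine the closed-form characterization of sparsemax in Proposition~\ref{prop:sparsemax} with the honest accumulated-loss bound \eqref{eq:honest_accumulated_loss_bound}. The scores are $z_j=-\eta L_{ij}^{(t)}$ for $j\in\mathcal N_i$, and by Proposition~\ref{prop:sparsemax}, $p_{ij}^{(t)}>0$ holds exactly when $-\eta L_{ij}^{(t)}>\tau$, where $\tau=\tau(z)$. So it suffices to upper bound $-\tau$, i.e., to show
\begin{equation*}
\tau \ge -\eta\widetilde{\Delta}_{\mathcal H}^{(t)}-\frac{1}{d_i^{\mathcal H}}.
\end{equation*}
Dividing $-\eta L_{ij}^{(t)}>\tau$ by $-\eta$ (assuming $\eta>0$) then gives the claimed strict inequality.

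To lower bound $\tau$, I will use the normalization $\sum_{k\in\mathcal N_i}\max\{z_k-\tau,0\}=1$ and restrict the sum to honest neighbors only. For every $k\in\mathcal N_i\cap\mathcal H$, \eqref{eq:honest_accumulated_loss_bound} gives $z_k\ge -\eta\widetilde{\Delta}_{\mathcal H}^{(t)}$. I will argue by contradiction. Suppose $\tau< -\eta\widetilde{\Delta}_{\mathcal H}^{(t)}-1/d_i^{\mathcal H}$. Then each honest neighbor satisfies $z_k-\tau>1/d_i^{\mathcal H}$. Since all terms in the normalization sum are nonnegative,
\begin{equation*}
1=\sum_{k\in\mathcal N_i}\max\{z_k-\tau,0\}\ge\sum_{k\in\mathcal N_i\cap\mathcal H}(z_k-\tau)>d_i^{\mathcal H}\cdot\frac{1}{d_i^{\mathcal H}}=1,
\end{equation*}
which is a contradiction. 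Hence $\tau\ge -\eta\widetilde{\Delta}_{\mathcal H}^{(t)}-1/d_i^{\mathcal H}$. Combining this with $-\eta L_{ij}^{(t)}>\tau$ gives $L_{ij}^{(t)}<\widetilde{\Delta}_{\mathcal H}^{(t)}+1/(\eta d_i^{\mathcal H})$.

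The role of Assumption~\ref{assump:majority_honest} is twofold. It guarantees $d_i^{\mathcal H}\ge 1$, so the bound is well defined. It is also what makes Lemma~\ref{lem:honest_loss_bound}, and hence \eqref{eq:honest_accumulated_loss_bound}, valid. The condition $\lambda\in[0,1)$ only enters through the discounted-sum identity behind \eqref{eq:honest_accumulated_loss_bound}.

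The step needing the most care is making sure the bound \eqref{eq:honest_accumulated_loss_bound} holds uniformly over every honest neighbor at time $t$, including the $L^{(-1)}=0$ initialization. Once that is in place, the threshold argument is a short pigeonhole-type computation, and the strictness of the final inequality comes directly from the strict characterization $p_{ij}^{(t)}>0 \iff z_j>\tau$.
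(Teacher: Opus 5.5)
Your proof is correct and follows essentially the same route as the paper. Both arguments use the sparsemax threshold characterization and the honest accumulated-loss bound \eqref{eq:honest_accumulated_loss_bound}. Both restrict the normalization to honest neighbors to obtain $1 \ge d_i^{\mathcal H}(-\eta\widetilde{\Delta}_{\mathcal H}^{(t)}-\tau)$, and then combine this with $-\eta L_{ij}^{(t)}>\tau$. The only difference is organizational: the paper splits on whether $L_{ij}^{(t)}>\widetilde{\Delta}_{\mathcal H}^{(t)}$ to force every honest neighbor into the support, whereas your contradiction argument lower-bounds $\tau$ uniformly in $j$ and so avoids that case split.
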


\begin{proof}
    Fix an honest node $i \in \mathcal{H}$.
    Denote the sparsemax threshold at node $i$ by $\tau_i^{(t)}$.

    When all neighbors' accumulated losses satisfy
    \begin{equation*}
        L_{ij}^{(t)}
        \leq
        \widetilde{\Delta}_{\mathcal{H}}^{(t)},
        \qquad
        \forall j \in \mathcal{N}_i,
    \end{equation*}
    the lemma holds trivially.
    Thus, it remains to consider the case where there exists a neighbor $j \in \mathcal{N}_i$ such that
    \begin{equation*}
        L_{ij}^{(t)}
        >
        \widetilde{\Delta}_{\mathcal{H}}^{(t)}.
    \end{equation*}

    Assume that $p_{ij}^{(t)} > 0$ for some neighbor $j \in \mathcal{N}_i$ satisfying
    \begin{equation*}
        L_{ij}^{(t)}
        >
        \widetilde{\Delta}_{\mathcal{H}}^{(t)}.
    \end{equation*}
    By the definition of sparsemax, we have
    \begin{equation*}
        -\eta L_{ij}^{(t)}
        >
        \tau_i^{(t)}.
    \end{equation*}
    For any honest neighbor $k \in \mathcal{N}_i \cap \mathcal{H}$, it follows from \eqref{eq:honest_accumulated_loss_bound} that
    \begin{equation*}
        -\eta L_{ik}^{(t)}
        \geq
        -\eta \widetilde{\Delta}_{\mathcal{H}}^{(t)}
        >
        -\eta L_{ij}^{(t)}
        >
        \tau_i^{(t)}.
    \end{equation*}
    Hence, $p_{ik}^{(t)} > 0$ for every honest neighbor $k \in \mathcal{N}_i \cap \mathcal{H}$.
    Therefore, by the definition of sparsemax,
    \begin{equation*}
        1
        =
        \sum_{v \in \mathcal{N}_i} p_{iv}^{(t)}
        \geq
        \sum_{k \in \mathcal{N}_i \cap \mathcal{H}} p_{ik}^{(t)}.
    \end{equation*}
    For each honest neighbor $k \in \mathcal{N}_i \cap \mathcal{H}$, we have
    \begin{equation*}
        p_{ik}^{(t)}
        =
        -\eta L_{ik}^{(t)}
        -
        \tau_i^{(t)}
        \geq
        -\eta \widetilde{\Delta}_{\mathcal{H}}^{(t)}
        -
        \tau_i^{(t)}.
    \end{equation*}
    It follows that
    \begin{equation*}
        1
        \geq
        d_i^{\mathcal{H}}
        \left(
        -\eta \widetilde{\Delta}_{\mathcal{H}}^{(t)}
        -
        \tau_i^{(t)}
        \right).
    \end{equation*}
    Since $p_{ij}^{(t)} > 0$, we also have
    \begin{equation*}
        -\tau_i^{(t)}
        >
        \eta L_{ij}^{(t)}.
    \end{equation*}
    Combining the above inequalities gives
    \begin{equation*}
        1
        >
        d_i^{\mathcal{H}}
        \eta
        \left(
        L_{ij}^{(t)}
        -
        \widetilde{\Delta}_{\mathcal{H}}^{(t)}
        \right).
    \end{equation*}
    Therefore,
    \begin{equation*}
        L_{ij}^{(t)}
        <
        \widetilde{\Delta}_{\mathcal{H}}^{(t)}
        +
        \frac{1}{\eta d_i^{\mathcal{H}}}.
    \end{equation*}
\end{proof}

\begin{remark}
    Lemma~\ref{lem:byzantine_remove_bound} shows that the truncation rule adapts to the discounted history of the honest-state dispersion.
    When the honest states remain dispersed, the discounted diameter is large, leading to a relatively loose truncation region and avoiding premature suppression of potentially useful neighbors.
    As the honest states approach consensus, the discounted diameter also converges to zero because $\lambda<1$, and the truncation region correspondingly becomes tighter.
    The term $1 / (\eta d_i^{\mathcal{H}})$ provides an additional margin that prevents small discrepancies from immediately causing truncation near consensus.
\end{remark}

\begin{proposition}
    \label{prop:weights_at_consensus}
    Suppose that Assumption~\ref{assump:majority_honest} holds and that $\lambda\in[0,1)$.
    Let
    \begin{equation*}
        \bar{x}_{\mathcal H}^{(t)}
        \coloneqq
        \frac{1}{|\mathcal H|}
        \sum_{i\in\mathcal H}
        x_i^{(t)}.
    \end{equation*}
    Assume that $\Delta_{\mathcal H}^{(t)}\to0$ and that there exist $T\geq0$ and $\gamma>0$ such that
    \begin{equation*}
        \left\|
        x_{k\to i}^{(t)}
        -
        \bar{x}_{\mathcal H}^{(t)}
        \right\|_\infty
        \geq
        \gamma,
        \qquad
        \forall t\geq T,\
        i\in\mathcal H,\
        k\in\mathcal N_i\cap\mathcal B.
    \end{equation*}
    If
    \begin{equation}
        \label{eq:persistent_separation_condition}
        \gamma
        >
        \frac{1-\lambda}{\eta d_{\min}^{\mathcal H}},
    \end{equation}
    then there exists a finite $T^\star\geq T$ such that
    \begin{equation*}
        p_{ik}^{(t)}
        =
        0,
        \qquad
        \forall t\geq T^\star,\
        i\in\mathcal H,\
        k\in\mathcal N_i\cap\mathcal B.
    \end{equation*}
    Moreover,
    \begin{equation*}
        p_{ij}^{(t)}
        \rightarrow
        \frac{1}{d_i^{\mathcal H}},
        \qquad
        \forall i\in\mathcal H,\
        j\in\mathcal N_i\cap\mathcal H.
    \end{equation*}
\end{proposition}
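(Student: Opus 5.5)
The plan is to show that, under the persistent separation assumption, the accumulated loss $L_{ik}^{(t)}$ of every Byzantine neighbor eventually exceeds the admissible threshold of Lemma~\ref{lem:byzantine_remove_bound}. The contrapositive of that lemma then forces $p_{ik}^{(t)}=0$.

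\textbf{Step 1 (instantaneous Byzantine loss).} First I will show that the coordinate-wise median stays close to the honest average. By the argument in the proof of Lemma~\ref{lem:honest_loss_bound}, Assumption~\ref{assump:majority_honest} gives, for each coordinate $\ell$, that $(\mathrm{cm}_i^{(t)})_\ell$ lies between the minimum and maximum of $(x_j^{(t)})_\ell$ over $j\in\mathcal N_i\cap\mathcal H$. The coordinate $(\bar x_{\mathcal H}^{(t)})_\ell$ lies between the minimum and maximum over all of $\mathcal H$. Hence $\|\mathrm{cm}_i^{(t)}-\bar x_{\mathcal H}^{(t)}\|_\infty\le\Delta_{\mathcal H}^{(t)}$. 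By the triangle inequality, for every $t\ge T$ and every Byzantine neighbor $k$,
\begin{equation*}
l_{ik}^{(t)}\ge \|x_{k\to i}^{(t)}-\bar x_{\mathcal H}^{(t)}\|_\infty-\Delta_{\mathcal H}^{(t)}\ge \gamma-\Delta_{\mathcal H}^{(t)}.
\end{equation*}

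\textbf{Step 2 (accumulated loss versus threshold).} All terms $l_{ik}^{(s)}$ are nonnegative, so I can discard $s<T$ and obtain
\begin{equation*}
L_{ik}^{(t)}\ge \gamma\,\frac{1-\lambda^{t-T+1}}{1-\lambda}-\widetilde\Delta_{\mathcal H}^{(t)}.
\end{equation*}
Next, $\widetilde\Delta_{\mathcal H}^{(t)}\to0$, because it is a discounted sum $\sum_s\lambda^{t-s}\Delta_{\mathcal H}^{(s)}$ of a null sequence with $\lambda<1$. This follows from a standard split at a large index: the early terms are killed by $\lambda^{t-s}$, and the late terms are bounded by $\sup_{s\ge S}\Delta_{\mathcal H}^{(s)}/(1-\lambda)$. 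Consequently
\begin{equation*}
\liminf_{t\to\infty}\Bigl(L_{ik}^{(t)}-\widetilde\Delta_{\mathcal H}^{(t)}\Bigr)\ge \frac{\gamma}{1-\lambda}>\frac{1}{\eta d_{\min}^{\mathcal H}}\ge \frac{1}{\eta d_i^{\mathcal H}},
\end{equation*}
where the strict inequality is \eqref{eq:persistent_separation_condition}. So for each pair $(i,k)$ there is a finite time after which
\begin{equation*}
L_{ik}^{(t)}\ge\widetilde\Delta_{\mathcal H}^{(t)}+\frac{1}{\eta d_i^{\mathcal H}},
\end{equation*}
and Lemma~\ref{lem:byzantine_remove_bound} gives $p_{ik}^{(t)}=0$ from then on. Since there are finitely many links in $\mathcal E_{\mathcal B\to\mathcal H}$, I take $T^\star$ to be the maximum of these times.

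\textbf{Step 3 (honest weights).} For $t\ge T^\star$ the support of the sparsemax at node $i$ is contained in $\mathcal N_i\cap\mathcal H$. By \eqref{eq:honest_accumulated_loss_bound}, the honest scores $z_j=-\eta L_{ij}^{(t)}$ lie in an interval of width $\varepsilon_t\le\eta\widetilde\Delta_{\mathcal H}^{(t)}\to0$.

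I will show that once $\varepsilon_t<1/d_i^{\mathcal H}$, every honest neighbor is in the support. Let $S$ be the support, with $S\subseteq\mathcal N_i\cap\mathcal H$, so $|S|\le d_i^{\mathcal H}$. By Proposition~\ref{prop:sparsemax}, $\tau=(\sum_{S}z_j-1)/|S|\le \max_j z_j-1/|S|$. Any honest $j\notin S$ has $z_j\ge \max z-\varepsilon_t>\max z-1/d_i^{\mathcal H}\ge\tau$, so $p_{ij}>0$, contradicting $j\notin S$.

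With $S=\mathcal N_i\cap\mathcal H$, Proposition~\ref{prop:sparsemax} then gives
\begin{equation*}
p_{ij}^{(t)}=\frac{1}{d_i^{\mathcal H}}+\Bigl(z_j-\frac{1}{d_i^{\mathcal H}}\sum_{v\in S}z_v\Bigr),
\end{equation*}
and the bracketed term has absolute value at most $\varepsilon_t\to0$. Hence $p_{ij}^{(t)}\to1/d_i^{\mathcal H}$.

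The main point needing care is Step 1, namely bounding the median's deviation from $\bar x_{\mathcal H}$ using only the honest-majority assumption. The support argument in Step 3 is the other delicate spot, and I would handle it by the threshold contradiction above.
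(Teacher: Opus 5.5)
Your proposal is correct and follows the paper's proof essentially step for step. The common steps are: the median-in-honest-range bound giving $l_{ik}^{(t)}\ge\gamma-\Delta_{\mathcal H}^{(t)}$, the discounted accumulation showing that $L_{ik}^{(t)}$ eventually exceeds $\widetilde\Delta_{\mathcal H}^{(t)}+1/(\eta d_i^{\mathcal H})$, and the contrapositive of Lemma~\ref{lem:byzantine_remove_bound} applied over the finitely many Byzantine-to-honest links. The only difference is the last step: the paper restricts sparsemax to the honest scores and appeals to its continuity at $0$, whereas your explicit support-and-threshold argument reaches the same limit and also yields the quantitative bound $|p_{ij}^{(t)}-1/d_i^{\mathcal H}|\le\eta\widetilde\Delta_{\mathcal H}^{(t)}$.
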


\begin{proof}
    Since $\Delta_{\mathcal H}^{(t)}\rightarrow0$ and $\lambda\in[0,1)$, the discounted honest-node diameter satisfies
    \begin{equation*}
        \widetilde{\Delta}_{\mathcal H}^{(t)}
        \rightarrow
        0.
    \end{equation*}
    From~\eqref{eq:honest_accumulated_loss_bound}, for every honest neighbor $j\in\mathcal N_i\cap\mathcal H$,
    \begin{equation*}
        0
        \leq
        L_{ij}^{(t)}
        \leq
        \widetilde{\Delta}_{\mathcal H}^{(t)},
    \end{equation*}
    and hence $L_{ij}^{(t)}\rightarrow0$.

    By Assumption~\ref{assump:majority_honest}, the coordinate-wise median lies within the coordinate-wise range of the honest neighbors.
    Therefore,
    \begin{equation*}
        \|\mathrm{cm}_i^{(t)}-\bar{x}_{\mathcal H}^{(t)}\|_\infty
        \leq
        \Delta_{\mathcal H}^{(t)}.
    \end{equation*}
    For every $i\in\mathcal H$, every Byzantine neighbor $k\in\mathcal N_i\cap\mathcal B$, and all $t\geq T$,
    \begin{equation*}
        l_{ik}^{(t)}
        =
        \|x_{k\to i}^{(t)}-\mathrm{cm}_i^{(t)}\|_\infty
        \geq
        \gamma
        -
        \Delta_{\mathcal H}^{(t)}.
    \end{equation*}
    Since $\Delta_{\mathcal H}^{(t)}\rightarrow0$, it follows that
    \begin{equation*}
        \liminf_{t\rightarrow\infty}
        l_{ik}^{(t)}
        \geq
        \gamma.
    \end{equation*}
    Using
    \begin{equation*}
        L_{ik}^{(t)}
        =
        \lambda L_{ik}^{(t-1)}
        +
        l_{ik}^{(t)},
    \end{equation*}
    we obtain
    \begin{equation*}
        \liminf_{t\rightarrow\infty}
        L_{ik}^{(t)}
        \geq
        \frac{\gamma}{1-\lambda}.
    \end{equation*}
    By~\eqref{eq:persistent_separation_condition},
    \begin{equation*}
        \frac{\gamma}{1-\lambda}
        >
        \frac{1}{\eta d_{\min}^{\mathcal H}}
        \geq
        \frac{1}{\eta d_i^{\mathcal H}}.
    \end{equation*}
    Since $\widetilde{\Delta}_{\mathcal H}^{(t)}\rightarrow0$, for each honest--Byzantine neighbor pair $(i,k)$ there exists a finite time after which
    \begin{equation*}
        L_{ik}^{(t)}
        \geq
        \widetilde{\Delta}_{\mathcal H}^{(t)}
        +
        \frac{1}{\eta d_i^{\mathcal H}}.
    \end{equation*}
    Since the network contains only finitely many such pairs, there exists a finite $T^\star\geq T$ such that the above inequality holds for all relevant pairs and all $t\geq T^\star$.
    By the contrapositive of Lemma~\ref{lem:byzantine_remove_bound},
    \begin{equation*}
        p_{ik}^{(t)}
        =
        0,
        \qquad
        \forall t\geq T^\star,\
        i\in\mathcal H,\
        k\in\mathcal N_i\cap\mathcal B.
    \end{equation*}

    After all Byzantine neighbors receive zero weight, the local sparsemax mapping reduces to the honest-neighbor scores.
    Since $L_{ij}^{(t)}\rightarrow0$ for every honest neighbor,
    \begin{equation*}
        \operatorname{sparsemax}
        \left(
        \{-\eta L_{ij}^{(t)}\}_{j\in\mathcal N_i\cap\mathcal H}
        \right)
        \rightarrow
        \operatorname{sparsemax}(0).
    \end{equation*}
    Therefore,
    \begin{equation*}
        p_{ij}^{(t)}
        \rightarrow
        \frac{1}{d_i^{\mathcal H}},
        \qquad
        i\in\mathcal H,\
        j\in\mathcal N_i\cap\mathcal H.
    \end{equation*}
\end{proof}

\begin{remark}
    Proposition~\ref{prop:weights_at_consensus} characterizes the identification behavior of A-RepC in the presence of historical reputation information.
    When the honest states approach consensus, the accumulated losses of honest neighbors vanish because $\lambda<1$.
    In contrast, a Byzantine neighbor that remains persistently separated from the honest-state cluster accumulates a nonvanishing loss.
    The resulting separation condition is
    \begin{equation*}
        \gamma
        >
        \frac{1-\lambda}{\eta d_{\min}^{\mathcal{H}}},
    \end{equation*}
    which reduces to the instantaneous threshold $1/(\eta d_{\min}^{\mathcal{H}})$ when $\lambda=0$.
    Hence, a larger forgetting factor allows persistent Byzantine deviations to be identified even when their instantaneous magnitude is smaller, reflecting the benefit of accumulating evidence over time.
\end{remark}

\begin{lemma}
    \label{lem:loss_lipschitz}
    For each $i\in\mathcal H$, let
    \begin{equation*}
        l_i :
        \mathbb R^{d(|\mathcal H|+|\mathcal E_{\mathcal B\to\mathcal H}|)}
        \to
        \mathbb R^{|\mathcal N_i|}
    \end{equation*}
    denote the instantaneous loss map induced by Algorithm~\ref{alg:arepc}.
    Then $l_i$ is Lipschitz continuous.
    More precisely, for any two communicated-state configurations $\Xi$ and $\Upsilon$,
    \begin{equation*}
        \|l_i(\Xi)-l_i(\Upsilon)\|
        \leq
        2\sqrt{|\mathcal N_i|}
        \|\Xi-\Upsilon\|.
    \end{equation*}
\end{lemma}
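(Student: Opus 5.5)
The plan is to bound each component of $l_i(\Xi)-l_i(\Upsilon)$ by $2\|\Xi-\Upsilon\|$ and then sum the squares over the $|\mathcal N_i|$ components. Fix $i\in\mathcal H$ and $j\in\mathcal N_i$. Write $y_j$ and $y'_j$ for the message $x_{j\to i}$ under $\Xi$ and $\Upsilon$, respectively. This message is a block of the stacked vector: it is $x_j$ if $j\in\mathcal H$, and it is the entry of $Z_{\mathcal B}$ indexed by $(j,i)$ if $j\in\mathcal B$. Write $\mathrm{cm}_i$ and $\mathrm{cm}_i'$ for the corresponding coordinate-wise medians.

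By the reverse triangle inequality for $\|\cdot\|_\infty$,
\begin{equation*}
    |l_{ij}(\Xi)-l_{ij}(\Upsilon)|
    \le
    \|y_j-y_j'\|_\infty+\|\mathrm{cm}_i-\mathrm{cm}_i'\|_\infty .
\end{equation*}

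The first term is immediate. Since $y_j-y'_j$ is a sub-block of $\Xi-\Upsilon$, we have $\|y_j-y_j'\|_\infty\le\|y_j-y_j'\|\le\|\Xi-\Upsilon\|$. Here the norm on the stacked state is the Euclidean (Frobenius) norm of the stacked matrix.

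The key step is to show that the coordinate-wise median is $1$-Lipschitz in the sup sense:
\begin{equation*}
    \|\mathrm{cm}_i-\mathrm{cm}_i'\|_\infty\le\max_{j\in\mathcal N_i}\|y_j-y'_j\|_\infty .
\end{equation*}
Fix a coordinate $k$ and let $\delta\coloneqq\max_{j}|(y_j)_k-(y'_j)_k|$. Then each entry of the multiset $\{(y'_j)_k\}_{j\in\mathcal N_i}$ lies within $\delta$ of the matching entry of $\{(y_j)_k\}_{j\in\mathcal N_i}$. The $r$-th order statistic is monotone under entrywise perturbations. Concretely, $(y'_j)_k\le (y_j)_k+\delta$ for every $j$, so the $r$-th smallest of the $(y'_j)_k$ is at most the $r$-th smallest of the $(y_j)_k$ plus $\delta$, and symmetrically in the other direction. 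Every order statistic therefore moves by at most $\delta$.

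The median is either a single order statistic or, when $|\mathcal N_i|$ is even, the average of two middle ones. In both cases it moves by at most $\delta$. This conclusion does not depend on which even-case convention is used, provided the median is a convex combination of fixed order statistics. Taking the maximum over $k$ yields the claim. Since every $y_j-y'_j$ is a sub-block of $\Xi-\Upsilon$, it follows that $\|\mathrm{cm}_i-\mathrm{cm}_i'\|_\infty\le\|\Xi-\Upsilon\|$. I expect this order-statistic monotonicity argument to be the only nontrivial point. The key observation is that, unlike the mean, the median can be controlled through the sup perturbation.

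Combining the two terms gives $|l_{ij}(\Xi)-l_{ij}(\Upsilon)|\le 2\|\Xi-\Upsilon\|$ for every $j\in\mathcal N_i$. Summing the squares over the $|\mathcal N_i|$ components of $l_i$ gives
\begin{equation*}
    \|l_i(\Xi)-l_i(\Upsilon)\|\le 2\sqrt{|\mathcal N_i|}\,\|\Xi-\Upsilon\|,
\end{equation*}
which is the stated bound.
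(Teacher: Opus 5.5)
Your proposal is correct and follows the paper's proof: the reverse triangle inequality, a per-component bound of $2\|\Xi-\Upsilon\|$ from sub-block control plus nonexpansiveness of the coordinate-wise median, then summing squares over the $|\mathcal N_i|$ components. The one difference is that your order-statistic monotonicity argument proves the median's nonexpansiveness, which the paper only asserts.
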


\begin{proof}
    Let $x_{j\to i}$ and $y_{j\to i}$ denote the values received by node $i$ from neighbor $j$ under $\Xi$ and $\Upsilon$, respectively.
    By the reverse triangle inequality,
    \begin{equation*}
        \begin{aligned}
             & |l_{ij}(\Xi)-l_{ij}(\Upsilon)| \\
             & \, =
            \left|
            \|x_{j\to i}-\operatorname{cm}_i(\Xi)\|_\infty
            -
            \|y_{j\to i}-\operatorname{cm}_i(\Upsilon)\|_\infty
            \right|                           \\
             & \, \leq
            \|x_{j\to i}-y_{j\to i}\|_\infty
            +
            \|\operatorname{cm}_i(\Xi)-\operatorname{cm}_i(\Upsilon)\|_\infty.
        \end{aligned}
    \end{equation*}

    Since the coordinate-wise median is nonexpansive with respect to the maximum perturbation of its inputs,
    \begin{equation*}
        \|\operatorname{cm}_i(\Xi)-\operatorname{cm}_i(\Upsilon)\|_\infty
        \leq
        \max_{j\in\mathcal N_i}
        \|x_{j\to i}-y_{j\to i}\|_\infty
        \leq
        \|\Xi-\Upsilon\|.
    \end{equation*}
    Moreover, for every $j\in\mathcal N_i$,
    \begin{equation*}
        \|x_{j\to i}-y_{j\to i}\|_\infty
        \leq
        \|\Xi-\Upsilon\|.
    \end{equation*}
    Therefore,
    \begin{equation*}
        |l_{ij}(\Xi)-l_{ij}(\Upsilon)|
        \leq
        2\|\Xi-\Upsilon\|,
        \qquad
        \forall j\in\mathcal N_i.
    \end{equation*}
    It follows that
    \begin{equation*}
        \|l_i(\Xi)-l_i(\Upsilon)\|^2
        \leq
        4|\mathcal N_i|
        \|\Xi-\Upsilon\|^2.
    \end{equation*}
    Taking square roots gives the desired result.
\end{proof}

\begin{lemma}
    \label{lem:weight_lipschitz}
    Consider two communicated-state trajectories
    $\{\Xi^{(s)}\}_{s=0}^{t}$
    and
    $\{\Upsilon^{(s)}\}_{s=0}^{t}$
    with the same initialization
    $L_i^{(-1)}=0$.
    Let $P_{\Xi}^{(t)}$ and $P_{\Upsilon}^{(t)}$ denote the corresponding weight matrices generated by Algorithm~\ref{alg:arepc}.
    Then
    \begin{equation*}
        \|P_{\Xi}^{(t)}-P_{\Upsilon}^{(t)}\|_F
        \leq
        2\eta
        \sqrt{
            \sum_{i\in\mathcal H}
            |\mathcal N_i|
        }
        \sum_{s=0}^{t}
        \lambda^{t-s}
        \|\Xi^{(s)}-\Upsilon^{(s)}\|.
    \end{equation*}
\end{lemma}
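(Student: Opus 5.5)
The plan is to chain three Lipschitz facts. The first is Lemma~\ref{lem:loss_lipschitz} for the instantaneous losses. The second is the linear recursion defining the accumulated losses $L_i^{(t)}$. The third is the nonexpansiveness of sparsemax, which is the Euclidean projection onto the simplex.

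First, fix $i\in\mathcal H$ and write $L_i^{(t)}(\Xi)\in\mathbb R^{|\mathcal N_i|}$ for the vector of accumulated losses along the trajectory $\{\Xi^{(s)}\}$. Because $L_i^{(-1)}=0$ for both trajectories, unrolling the recursion gives
\begin{equation*}
    L_i^{(t)}(\Xi)-L_i^{(t)}(\Upsilon)
    =
    \sum_{s=0}^{t}\lambda^{t-s}\bigl(l_i(\Xi^{(s)})-l_i(\Upsilon^{(s)})\bigr).
\end{equation*}
Applying the triangle inequality and then Lemma~\ref{lem:loss_lipschitz} termwise yields
\begin{equation*}
    \|L_i^{(t)}(\Xi)-L_i^{(t)}(\Upsilon)\|
    \le
    2\sqrt{|\mathcal N_i|}\sum_{s=0}^{t}\lambda^{t-s}\|\Xi^{(s)}-\Upsilon^{(s)}\|.
\end{equation*}

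Second, I will use that sparsemax is $1$-Lipschitz in the Euclidean norm. By its definition, $\operatorname{sparsemax}(z)=\arg\min_{p\in\Delta^{K-1}}\frac12\|p-z\|^2$, so it is the Euclidean projection onto a closed convex set and is therefore nonexpansive. This is the only ingredient not stated earlier in the paper. I will justify it in one line by completing the square in the definition and invoking the standard projection property, or alternatively by the piecewise-linear form in Proposition~\ref{prop:sparsemax}. It follows that the $i$-th row $p_i^{(t)}$ of the weight matrix satisfies $\|p_i^{(t)}(\Xi)-p_i^{(t)}(\Upsilon)\|\le\eta\|L_i^{(t)}(\Xi)-L_i^{(t)}(\Upsilon)\|$.

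Third, I will assemble the rows. The squared Frobenius norm of $P_\Xi^{(t)}-P_\Upsilon^{(t)}$ is the sum over $i\in\mathcal H$ of the squared row differences, which is at most
\begin{equation*}
    4\eta^2\sum_{i\in\mathcal H}|\mathcal N_i|\Bigl(\sum_{s=0}^{t}\lambda^{t-s}\|\Xi^{(s)}-\Upsilon^{(s)}\|\Bigr)^2,
\end{equation*}
here treating $P$ as having only honest rows with zeros outside the neighbor supports. Since the inner sum does not depend on $i$, taking square roots gives exactly the stated bound.

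The only genuine obstacle is the nonexpansiveness of sparsemax. Everything else is bookkeeping: the fixed dimension of the index sets across both trajectories, and the fact that the graph and $\mathcal E_{\mathcal B\to\mathcal H}$ are the same for $\Xi$ and $\Upsilon$, so that the rows are compared coordinatewise.
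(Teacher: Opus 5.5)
Your proposal is correct and follows the paper's proof step for step. You unroll the discounted recursion for $L_i^{(t)}$ from the common zero initialization, apply Lemma~\ref{lem:loss_lipschitz} termwise, and use that $p_i^{(t)}=\operatorname{sparsemax}(-\eta L_i^{(t)})$ is $\eta$-Lipschitz in $L_i^{(t)}$. You then sum the squared row differences, each row of the expanded weight matrix being a zero-padded copy of $p_i^{(t)}$, to obtain the Frobenius bound. The one place you go beyond the paper is in justifying the nonexpansiveness of sparsemax via its characterization as the Euclidean projection onto the simplex, which the paper simply asserts.
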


\begin{proof}
    For each $i\in\mathcal H$, the accumulated loss vectors satisfy
    \begin{equation*}
        L_{i,\Xi}^{(t)}
        -
        L_{i,\Upsilon}^{(t)}
        =
        \sum_{s=0}^{t}
        \lambda^{t-s}
        \left(
        l_i(\Xi^{(s)})
        -
        l_i(\Upsilon^{(s)})
        \right).
    \end{equation*}
    By Lemma~\ref{lem:loss_lipschitz},
    \begin{equation*}
        \left\|
        L_{i,\Xi}^{(t)}
        -
        L_{i,\Upsilon}^{(t)}
        \right\|
        \leq
        2\sqrt{|\mathcal N_i|}
        \sum_{s=0}^{t}
        \lambda^{t-s}
        \|\Xi^{(s)}-\Upsilon^{(s)}\|.
    \end{equation*}
    Since sparsemax is nonexpansive,
    \begin{equation*}
        \left\|
        P_{i,\Xi}^{(t)}
        -
        P_{i,\Upsilon}^{(t)}
        \right\|
        \leq
        \eta
        \left\|
        L_{i,\Xi}^{(t)}
        -
        L_{i,\Upsilon}^{(t)}
        \right\|.
    \end{equation*}
    Therefore,
    \begin{equation*}
        \left\|
        P_{i,\Xi}^{(t)}
        -
        P_{i,\Upsilon}^{(t)}
        \right\|
        \leq
        2\eta
        \sqrt{|\mathcal N_i|}
        \sum_{s=0}^{t}
        \lambda^{t-s}
        \|\Xi^{(s)}-\Upsilon^{(s)}\|.
    \end{equation*}
    Stacking the rows corresponding to all $i\in\mathcal H$ gives
    \begin{equation*}
        \|P_{\Xi}^{(t)}-P_{\Upsilon}^{(t)}\|_F
        \leq
        2\eta
        \sqrt{
            \sum_{i\in\mathcal H}
            |\mathcal N_i|
        }
        \sum_{s=0}^{t}
        \lambda^{t-s}
        \|\Xi^{(s)}-\Upsilon^{(s)}\|.
    \end{equation*}
\end{proof}

We now turn to the general convergence analysis.
Lemma~\ref{lem:byzantine_remove_bound} shows that neighbors whose accumulated losses are sufficiently large relative to the discounted honest-state dispersion are removed by the truncation rule.
However, this pointwise truncation property does not provide a uniform-in-time bound on Byzantine messages that remain admissible, particularly when Byzantine behaviors vary over time.
We therefore impose the following boundedness assumption.

\begin{assumption}[Bounded Byzantine messages]
    \label{assump:bounded_byzantine}
    There exists a constant $M_{\mathcal B}>0$ such that
    \begin{equation*}
        \|x_{j\to i}^{(t)}\|
        \leq
        M_{\mathcal B},
        \qquad
        \forall i\in\mathcal H,\
        j\in\mathcal N_i\cap\mathcal B,\
        t\geq0.
    \end{equation*}
\end{assumption}

Under this assumption, Byzantine neighbors may still be assigned positive weights, but the magnitudes of their transmitted values remain uniformly bounded.

We next introduce the graph-theoretic notation used in the convergence analysis.
Let $A_{\mathcal H}$ and $D_{\mathcal H}$ denote the adjacency matrix and degree matrix of the honest subgraph, respectively.
Accordingly,
\begin{equation*}
    D_{\mathcal H}
    =
    \operatorname{diag}
    \left(
    \{d_i^{\mathcal H}\}_{i\in\mathcal H}
    \right).
\end{equation*}
Define the nominal honest-node mixing matrix as
\begin{equation*}
    P_{\mathcal H}^{\star}
    \coloneqq
    D_{\mathcal H}^{-1}A_{\mathcal H}.
\end{equation*}
The associated symmetric normalized Laplacian is
\begin{equation*}
    L_{\mathrm{sym},\mathcal H}
    \coloneqq
    I
    -
    D_{\mathcal H}^{-1/2}
    A_{\mathcal H}
    D_{\mathcal H}^{-1/2}.
\end{equation*}
Let $\pi\in\mathbb R^{|\mathcal H|}$ denote the stationary distribution associated with $P_{\mathcal H}^{\star}$, satisfying
\begin{equation*}
    P_{\mathcal H}^{\star}\mathbf 1
    =
    \mathbf 1,
    \qquad
    \pi^\top P_{\mathcal H}^{\star}
    =
    \pi^\top,
    \qquad
    \pi^\top\mathbf 1
    =
    1.
\end{equation*}
For the undirected honest subgraph considered here, the stationary distribution is
\begin{equation*}
    \pi_i
    =
    \frac{
    d_i^{\mathcal H}
    }{
    \sum_{\ell\in\mathcal H}d_\ell^{\mathcal H}
    },
    \qquad
    i\in\mathcal H.
\end{equation*}
Define the stationary-distribution disagreement projector and the corresponding disagreement vector as
\begin{equation*}
    \Pi_\pi
    \coloneqq
    \left(
    I_{|\mathcal H|}
    -
    \mathbf 1\pi^\top
    \right)
    \otimes I_d,
    \qquad
    E_{\mathcal H}^{(t)}
    \coloneqq
    \Pi_\pi X_{\mathcal H}^{(t)}.
\end{equation*}
Let $P_{\mathcal H}^{(t)}$ denote the honest-neighbor weight matrix.
We further define the Byzantine-message weight matrix
$P_{\mathcal B\to\mathcal H}^{(t)}
    \in
    \mathbb R^{|\mathcal H|\times m_{\mathcal B\to\mathcal H}}$,
whose columns are indexed by $(k,r)\in\mathcal E_{\mathcal B\to\mathcal H}$, with
\begin{equation*}
    \left[
        P_{\mathcal B\to\mathcal H}^{(t)}
        \right]_{i,(k,r)}
    =
    \begin{cases}
        p_{ik}^{(t)}, & r=i,     \\
        0,            & r\neq i.
    \end{cases}
\end{equation*}
Define the expanded adaptive mixing matrix as
\begin{equation*}
    \widetilde P^{(t)}
    \coloneqq
    \begin{bmatrix}
        P_{\mathcal H}^{(t)}
         &
        P_{\mathcal B\to\mathcal H}^{(t)}
    \end{bmatrix}.
\end{equation*}
We equip the honest-node state space with the degree-weighted norm
\begin{equation*}
    \|Z\|_{D_{\mathcal H}}
    \coloneqq
    \left\|
    \left(
    D_{\mathcal H}^{1/2}
    \otimes I_d
    \right)
    Z
    \right\|_2.
\end{equation*}
For notational simplicity, whenever a scalar-weight matrix acts on stacked $d$-dimensional vectors, we use the same symbol to denote its Kronecker lifting with $I_d$.
To state the main result compactly, for a given $\alpha\in(0,1)$, define
\begin{equation*}
    \rho_0
    \coloneqq
    \max_{\ell=2,\ldots,|\mathcal H|}
    |1-\alpha\mu_\ell|,
\end{equation*}
where $0=\mu_1<\mu_2\leq\cdots\leq\mu_{|\mathcal H|}\leq2$ are the eigenvalues of $L_{\mathrm{sym},\mathcal H}$.
Let
\begin{equation*}
    M
    \coloneqq
    \max\{M_{\mathcal H},M_{\mathcal B}\},
    \qquad
    M_{\mathcal H}
    \coloneqq
    \max_{i\in\mathcal H}\|x_i^{(0)}\|.
\end{equation*}
Define
\begin{equation*}
    C_1
    \coloneqq
    2M
    \sqrt{
    \frac{
    n_{\Xi}d_{\max}^{\mathcal H}
    }{
    d_{\min}^{\mathcal H}
    }
    \sum_{i\in\mathcal H}|\mathcal N_i|
    },
\end{equation*}
and
\begin{equation*}
    C_2
    \coloneqq
    \frac{
    2\sqrt{
    n_{\Xi}d_{\max}^{\mathcal H}
    m_{\mathcal B\to\mathcal H}
    \sum_{i\in\mathcal H}|\mathcal N_i|
    }
    }{
    d_{\min}^{\mathcal H}
    }.
\end{equation*}

\begin{theorem}
    \label{thm:bounded_byzantine_ISS}
    Suppose that Assumptions~\ref{assump:majority_honest},~\ref{assump:honest_connected}, and~\ref{assump:bounded_byzantine} hold, and that $\lambda\in[0,1)$.
    For any fixed $\alpha\in(0,1)$, if $\eta>0$ satisfies
    \begin{equation}
        \label{eq:augmented_stability_condition}
        \alpha\eta C_1
        <
        (1-\rho_0)(1-\lambda),
    \end{equation}
    then there exist constants $\kappa\geq1$ and $\rho\in(0,1)$ such that, for all $t\geq0$,
    \begin{equation*}
        \left\|E_{\mathcal H}^{(t)}\right\|_{D_{\mathcal H}}
        \leq
        \kappa\rho^t\left\|E_{\mathcal H}^{(0)}\right\|_{D_{\mathcal H}}
        +
        \frac{\alpha C_2 M}{(1-\rho_0)(1-\lambda)-\alpha\eta C_1}.
    \end{equation*}
    Consequently, the honest nodes approach a bounded neighborhood of the consensus manifold.
\end{theorem}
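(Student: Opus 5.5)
The approach is to view the honest dynamics as a perturbation of the nominal consensus iteration driven by $P_{\mathcal H}^{\star}$, and then close a discrete-time small-gain / Gronwall-type argument in the degree-weighted norm. Stacking the updates of Algorithm~\ref{alg:arepc} gives
\begin{equation*}
X_{\mathcal H}^{(t+1)}=(1-\alpha)X_{\mathcal H}^{(t)}+\alpha\widetilde P^{(t)}\Xi^{(t)} .
\end{equation*}
I will add and subtract the nominal term $\alpha P_{\mathcal H}^{\star}X_{\mathcal H}^{(t)}$. The nominal expanded matrix $[P_{\mathcal H}^{\star}\ 0]$ is exactly the weight matrix that sparsemax outputs when all losses vanish. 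This follows from Proposition~\ref{prop:weights_at_consensus}, since $\operatorname{sparsemax}(0)$ is uniform, but only over \emph{all} neighbors. So I will instead treat the nominal weights as a reference trajectory and write the perturbation as $(\widetilde P^{(t)}-[P_{\mathcal H}^{\star}\ 0])\Xi^{(t)}$. Applying $\Pi_\pi$, which commutes with $P_{\mathcal H}^{\star}$ and annihilates $\mathbf 1\otimes v$, yields
\begin{equation*}
E_{\mathcal H}^{(t+1)}=\big((1-\alpha)I+\alpha P_{\mathcal H}^{\star}\big)E_{\mathcal H}^{(t)}+\alpha\Pi_\pi\big(\widetilde P^{(t)}-[P_{\mathcal H}^{\star}\ 0]\big)\Xi^{(t)} .
\end{equation*}
The nominal matrix is self-adjoint in the $D_{\mathcal H}$-inner product and similar to $I-\alpha L_{\mathrm{sym},\mathcal H}$. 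On the disagreement subspace it is therefore a contraction with factor $\rho_0<1$, using Assumption~\ref{assump:honest_connected} to get $\mu_2>0$ and $\alpha<1$ to get $\mu\le 2$. The projector $\Pi_\pi$ is $D_{\mathcal H}$-orthogonal, since $\pi\propto D_{\mathcal H}\mathbf 1$, so it does not inflate the norm.

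Next I bound the perturbation. There are two parts. The first is a Byzantine part: the columns of $P_{\mathcal B\to\mathcal H}^{(t)}$ act on $Z_{\mathcal B}^{(t)}$, which is bounded by Assumption~\ref{assump:bounded_byzantine}. The second is a weight-deviation part $(P_{\mathcal H}^{(t)}-P_{\mathcal H}^{\star})X_{\mathcal H}^{(t)}$. For the weight deviation I will invoke Lemma~\ref{lem:weight_lipschitz}, comparing the actual trajectory $\Xi$ with a reference trajectory $\Upsilon$. In $\Upsilon$ all honest states and all Byzantine messages are placed at a common consensus point, namely the $\pi$-average, so every loss is zero and the resulting weights are uniform. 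Then $\|\Xi^{(s)}-\Upsilon^{(s)}\|$ is controlled by $\|E_{\mathcal H}^{(s)}\|$ plus the Byzantine offset, whose size is at most $\sqrt{m_{\mathcal B\to\mathcal H}}\,2M$. Before that I need $\|X_{\mathcal H}^{(t)}\|\le\sqrt{n_\Xi}M$ for all $t$, which follows because each update is a convex combination of values of norm at most $M$. Converting between the Euclidean norm and $\|\cdot\|_{D_{\mathcal H}}$ contributes the factors $\sqrt{d_{\max}^{\mathcal H}/d_{\min}^{\mathcal H}}$ and $1/d_{\min}^{\mathcal H}$ that appear in $C_1$ and $C_2$. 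The target is a recursion of the form
\begin{equation*}
e_{t+1}\le\rho_0 e_t+\alpha\eta C_1\sum_{s=0}^{t}\lambda^{t-s}e_s+\alpha C_2 M,\qquad e_t:=\|E_{\mathcal H}^{(t)}\|_{D_{\mathcal H}} ,
\end{equation*}
possibly with the constant term also carrying a discounted sum, which is bounded by $1/(1-\lambda)$.

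Finally I solve this recursion. I will introduce the auxiliary variable $S_t=\sum_{s\le t}\lambda^{t-s}e_s$, so that $S_{t+1}=\lambda S_t+e_{t+1}$. This gives a two-dimensional positive linear comparison system in $(e_t,S_t)$. Condition \eqref{eq:augmented_stability_condition} is precisely the statement that the spectral radius of the comparison matrix is below one. The check is that $(1-\rho_0)(1-\lambda)-\alpha\eta C_1>0$ is equivalent to $\det(I-A)>0$ for the $2\times2$ Metzler-type matrix $A$, which for nonnegative matrices with diagonal entries below $1$ gives $\rho(A)<1$. Iterating then yields $\kappa\rho^t e_0$ plus a steady-state bound. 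That steady-state bound is $(I-A)^{-1}$ applied to the forcing term, and it evaluates to $\alpha C_2M/((1-\rho_0)(1-\lambda)-\alpha\eta C_1)$.

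\textbf{Main obstacle.} I expect the main difficulty to be the weight-deviation step, for two reasons. First, the reference trajectory must produce exactly $P_{\mathcal H}^{\star}$ on honest neighbors, whereas uniform weights over all of $\mathcal N_i$ include Byzantine slots. I would handle this by absorbing the mismatch into the Byzantine forcing, which is where the $m_{\mathcal B\to\mathcal H}$ factor in $C_2$ comes from. Second, the constants must come out exactly as $C_1$ and $C_2$, including the $(1-\lambda)$ factor arising from the discounted sums.
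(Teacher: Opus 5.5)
Your skeleton matches the paper's: you perturb around $(1-\alpha)I+\alpha P_{\mathcal H}^{\star}$, project with $\Pi_\pi$, contract by $\rho_0$ in $\|\cdot\|_{D_{\mathcal H}}$, compare to a reference trajectory via Lemma~\ref{lem:weight_lipschitz}, and close with the $2\times2$ comparison system in $(e_t,q_t)$. Your M-matrix check $\det(I-A)>0$ is equivalent to the paper's Schur test. The gap is the step you flag as the main obstacle, and your proposed fix does not give the stated bound.

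With your reference $\Upsilon$, the Byzantine messages are also collapsed onto $\bar x_{\pi,\mathcal H}^{(s)}$. Then all losses vanish and sparsemax puts $1/|\mathcal N_i|$ on every neighbor. So the reference weights differ from $\widetilde P^{\star}$ by a constant matrix with $\|\cdot\|_F^2=\sum_{i\in\mathcal H}|\mathcal N_i\cap\mathcal B|/(|\mathcal N_i|\,d_i^{\mathcal H})$, and this error never decays. In addition, the offset $2M\sqrt{m_{\mathcal B\to\mathcal H}}$, multiplied by the factor $2\eta\sqrt{\sum_i|\mathcal N_i|}$ of Lemma~\ref{lem:weight_lipschitz}, gives the forcing term $\alpha\,(2\eta M d_{\min}^{\mathcal H})\,C_2M/(1-\lambda)$, which scales like $\eta M^2$. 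You would also add the mismatch term and a separate bound on $P_{\mathcal B\to\mathcal H}^{(t)}Z_{\mathcal B}^{(t)}$, where you only know $p_{ik}\le 1$. The result is an ISS estimate of the right shape but with a different constant. It cannot produce $C_2$, which contains no $\eta$ and carries $1/d_{\min}^{\mathcal H}$ where your route carries $2\eta M$. ``Absorbing the mismatch into the Byzantine forcing'' is not where the $m_{\mathcal B\to\mathcal H}$ in $C_2$ comes from.

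The missing idea is a truncation-aware reference trajectory.
\begin{itemize}
\item Keep the honest reference states at $\bar x_{\pi,\mathcal H}^{(s)}$.
\item Leave a Byzantine message unchanged if $\|x_{k\to i}^{(s)}-\bar x_{\pi,\mathcal H}^{(s)}\|_\infty\ge 1/(\eta d_{\min}^{\mathcal H})$. Otherwise push one coordinate outward until this distance equals $1/(\eta d_{\min}^{\mathcal H})$.
\item By the strict honest majority, the reference median is $\bar x_{\pi,\mathcal H}^{(s)}$. So honest reference losses are $0$ and Byzantine ones are at least $1/(\eta d_{\min}^{\mathcal H})$.
\item Hence $\widehat L_{ik}^{(t)}\ge 1/(\eta d_i^{\mathcal H})$, the sparsemax threshold is $-1/d_i^{\mathcal H}$, and every Byzantine slot gets exactly zero weight.
\end{itemize}
The reference weight matrix therefore \emph{equals} $\widetilde P^{\star}$ for all $t$, so there is no mismatch term. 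The modification costs at most $1/(\eta d_{\min}^{\mathcal H})$ per Byzantine block, so $\|\Xi^{(s)}-\widehat\Xi^{(s)}\|\le\|E_{\mathcal H}^{(s)}\|+\sqrt{m_{\mathcal B\to\mathcal H}}/(\eta d_{\min}^{\mathcal H})$. The $\eta$ then cancels against the constant of Lemma~\ref{lem:weight_lipschitz}.

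Now bound the whole perturbation at once by $\|\widetilde P^{(t)}-\widetilde P^{\star}\|_F\,\|\Xi^{(t)}\|$, using $\|\Xi^{(t)}\|\le\sqrt{n_\Xi}M$ and $\|\cdot\|_{D_{\mathcal H}}\le\sqrt{d_{\max}^{\mathcal H}}\|\cdot\|$. You need the norm bound for all of $\Xi^{(t)}$, not only $X_{\mathcal H}^{(t)}$. This gives exactly $\alpha\eta C_1q_t+\alpha C_2M/(1-\lambda)$, with no separate Byzantine-part estimate.
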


\begin{proof}
    By Assumption~\ref{assump:bounded_byzantine} and the convex-combination update of the honest nodes, induction gives
    \begin{equation}
        \label{eq:global_state_bound}
        \|\Xi^{(t)}\|
        \leq
        \sqrt{n_{\Xi}}M,
        \qquad
        \forall t\geq0.
    \end{equation}

    Define the ideal expanded mixing matrix as
    \begin{equation*}
        \widetilde P^{\star}
        \coloneqq
        \begin{bmatrix}
            P_{\mathcal H}^{\star}
             &
            0
        \end{bmatrix},
    \end{equation*}
    and let
    \begin{equation*}
        W_{\mathcal H}^{\star}
        \coloneqq
        (1-\alpha)I
        +
        \alpha P_{\mathcal H}^{\star}.
    \end{equation*}
    The honest-node dynamics can then be written as
    \begin{equation*}
        X_{\mathcal H}^{(t+1)}
        =
        W_{\mathcal H}^{\star}
        X_{\mathcal H}^{(t)}
        +
        \alpha
        \left(
        \widetilde P^{(t)}
        -
        \widetilde P^{\star}
        \right)
        \Xi^{(t)}.
    \end{equation*}
    Since
    \begin{equation*}
        W_{\mathcal H}^{\star}\mathbf 1
        =
        \mathbf 1,
        \qquad
        \pi^\top W_{\mathcal H}^{\star}
        =
        \pi^\top,
    \end{equation*}
    we have
    \begin{equation*}
        \Pi_\pi W_{\mathcal H}^{\star}
        =
        W_{\mathcal H}^{\star}\Pi_\pi.
    \end{equation*}
    Hence,
    \begin{equation}
        \label{eq:weighted_disagreement_dynamics}
        E_{\mathcal H}^{(t+1)}
        =
        W_{\mathcal H}^{\star}
        E_{\mathcal H}^{(t)}
        +
        \alpha
        \Pi_\pi
        \left(
        \widetilde P^{(t)}
        -
        \widetilde P^{\star}
        \right)
        \Xi^{(t)}.
    \end{equation}

    Moreover,
    \begin{equation*}
        D_{\mathcal H}^{1/2}
        W_{\mathcal H}^{\star}
        D_{\mathcal H}^{-1/2}
        =
        I
        -
        \alpha
        L_{\mathrm{sym},\mathcal H}.
    \end{equation*}
    By the definition of $\rho_0$ and Assumption~\ref{assump:honest_connected}, we have $\rho_0<1$.
    Therefore,
    \begin{equation}
        \label{eq:nominal_weighted_contraction}
        \left\|
        W_{\mathcal H}^{\star}
        E_{\mathcal H}^{(t)}
        \right\|_{D_{\mathcal H}}
        \leq
        \rho_0
        \left\|
        E_{\mathcal H}^{(t)}
        \right\|_{D_{\mathcal H}}.
    \end{equation}

    We next bound the deviation of the adaptive weight matrix from the ideal weight matrix.
    Let
    \begin{equation*}
        \bar{x}_{\pi,\mathcal H}^{(s)}
        \coloneqq
        \sum_{i\in\mathcal H}
        \pi_i x_i^{(s)}.
    \end{equation*}
    For each $s\geq0$, construct a reference communicated state $\widehat\Xi^{(s)}$ by setting
    \begin{equation*}
        \widehat x_i^{(s)}
        =
        \bar{x}_{\pi,\mathcal H}^{(s)},
        \qquad
        \forall i\in\mathcal H.
    \end{equation*}
    For each $(k,i)\in\mathcal E_{\mathcal B\to\mathcal H}$, if
    \begin{equation*}
        \left\|
        x_{k\to i}^{(s)}
        -
        \bar{x}_{\pi,\mathcal H}^{(s)}
        \right\|_\infty
        \geq
        \frac{1}{\eta d_{\min}^{\mathcal H}},
    \end{equation*}
    set $\widehat x_{k\to i}^{(s)}=x_{k\to i}^{(s)}$.
    Otherwise, move one coordinate of $x_{k\to i}^{(s)}$ away from $\bar{x}_{\pi,\mathcal H}^{(s)}$ until
    \begin{equation*}
        \left\|
        \widehat x_{k\to i}^{(s)}
        -
        \bar{x}_{\pi,\mathcal H}^{(s)}
        \right\|_\infty
        =
        \frac{1}{\eta d_{\min}^{\mathcal H}}.
    \end{equation*}
    Since each Byzantine-message block is modified by at most $1/(\eta d_{\min}^{\mathcal H})$, this construction guarantees
    \begin{equation}
        \label{eq:reference_state_distance}
        \left\|
        \Xi^{(s)}
        -
        \widehat\Xi^{(s)}
        \right\|
        \leq
        \left\|
        E_{\mathcal H}^{(s)}
        \right\|
        +
        \frac{
            \sqrt{m_{\mathcal B\to\mathcal H}}
        }{
            \eta d_{\min}^{\mathcal H}
        }.
    \end{equation}

    Since all honest reference states coincide and each honest node has a strict majority of honest neighbors, the coordinate-wise median of the reference neighbor messages satisfies
    \begin{equation*}
        \widehat{\mathrm{cm}}_i^{(s)}
        =
        \bar{x}_{\pi,\mathcal H}^{(s)}.
    \end{equation*}
    Hence, for every honest neighbor $j\in\mathcal N_i\cap\mathcal H$,
    \begin{equation*}
        \widehat l_{ij}^{(s)}
        =
        0,
    \end{equation*}
    while for every Byzantine neighbor $k\in\mathcal N_i\cap\mathcal B$,
    \begin{equation*}
        \widehat l_{ik}^{(s)}
        \geq
        \frac{1}{\eta d_{\min}^{\mathcal H}}.
    \end{equation*}
    Therefore,
    \begin{equation*}
        \widehat L_{ij}^{(t)}
        =
        0,
        \qquad
        j\in\mathcal N_i\cap\mathcal H,
    \end{equation*}
    and
    \begin{equation*}
        \widehat L_{ik}^{(t)}
        \geq
        \frac{1}{\eta d_{\min}^{\mathcal H}}
        \sum_{s=0}^{t}\lambda^{t-s}
        \geq
        \frac{1}{\eta d_i^{\mathcal H}},
        \qquad
        k\in\mathcal N_i\cap\mathcal B.
    \end{equation*}
    It follows from the definition of sparsemax that the expanded reference weight matrix is exactly the ideal matrix,
    \begin{equation*}
        \widehat{\widetilde P}^{(t)}
        =
        \widetilde P^{\star},
        \qquad
        \forall t\geq0.
    \end{equation*}

    Applying Lemma~\ref{lem:weight_lipschitz} to the actual and reference communicated-state histories gives
    \begin{equation*}
        \left\|
        \widetilde P^{(t)}
        -
        \widetilde P^{\star}
        \right\|_F
        \leq
        2\eta
        \sqrt{
            \sum_{i\in\mathcal H}
            |\mathcal N_i|
        }
        \sum_{s=0}^{t}
        \lambda^{t-s}
        \left\|
        \Xi^{(s)}
        -
        \widehat\Xi^{(s)}
        \right\|.
    \end{equation*}
    Substituting~\eqref{eq:reference_state_distance} gives
    \begin{equation*}
        \begin{aligned}
            \left\|
            \widetilde P^{(t)}
            -
            \widetilde P^{\star}
            \right\|_F
            \leq{} &
            2\eta
            \sqrt{
                \sum_{i\in\mathcal H}
                |\mathcal N_i|
            }
            \sum_{s=0}^{t}
            \lambda^{t-s}
            \left\|
            E_{\mathcal H}^{(s)}
            \right\|
            \\
                   & +
            \frac{
                2
                \sqrt{
                    m_{\mathcal B\to\mathcal H}
                    \sum_{i\in\mathcal H}
                    |\mathcal N_i|
                }
            }{
                d_{\min}^{\mathcal H}
            }
            \sum_{s=0}^{t}
            \lambda^{t-s}.
        \end{aligned}
    \end{equation*}
    Since
    \begin{equation*}
        \left\|
        E_{\mathcal H}^{(s)}
        \right\|
        \leq
        \frac{1}{
            \sqrt{d_{\min}^{\mathcal H}}
        }
        \left\|
        E_{\mathcal H}^{(s)}
        \right\|_{D_{\mathcal H}},
    \end{equation*}
    and
    \begin{equation*}
        \sum_{s=0}^{t}
        \lambda^{t-s}
        \leq
        \frac{1}{1-\lambda},
    \end{equation*}
    we obtain
    \begin{equation}
        \label{eq:P_difference_weighted_memory}
        \begin{aligned}
            \left\|
            \widetilde P^{(t)}
            -
            \widetilde P^{\star}
            \right\|_F
            \leq{} &
            \frac{
                2\eta
                \sqrt{
                    \sum_{i\in\mathcal H}
                    |\mathcal N_i|
                }
            }{
                \sqrt{d_{\min}^{\mathcal H}}
            }
            \sum_{s=0}^{t}
            \lambda^{t-s}
            \left\|
            E_{\mathcal H}^{(s)}
            \right\|_{D_{\mathcal H}}
            \\
                   & +
            \frac{
                2
                \sqrt{
                    m_{\mathcal B\to\mathcal H}
                    \sum_{i\in\mathcal H}
                    |\mathcal N_i|
                }
            }{
                d_{\min}^{\mathcal H}(1-\lambda)
            }.
        \end{aligned}
    \end{equation}

    Define
    \begin{equation*}
        e_t
        \coloneqq
        \left\|
        E_{\mathcal H}^{(t)}
        \right\|_{D_{\mathcal H}},
        \qquad
        q_t
        \coloneqq
        \sum_{s=0}^{t}
        \lambda^{t-s}e_s.
    \end{equation*}
    Since $\Pi_\pi$ is the orthogonal projector onto the disagreement subspace under the $D_{\mathcal H}$-weighted inner product, it is nonexpansive:
    \begin{equation*}
        \|\Pi_\pi Z\|_{D_{\mathcal H}}
        \leq
        \|Z\|_{D_{\mathcal H}}.
    \end{equation*}
    Moreover,
    \begin{equation*}
        \|Z\|_{D_{\mathcal H}}
        \leq
        \sqrt{d_{\max}^{\mathcal H}}
        \|Z\|.
    \end{equation*}
    Combining these inequalities with~\eqref{eq:global_state_bound} and~\eqref{eq:P_difference_weighted_memory} yields
    \begin{equation}
        \label{eq:weighted_perturbation_bound_memory}
        \alpha
        \left\|
        \Pi_\pi
        \left(
        \widetilde P^{(t)}
        -
        \widetilde P^{\star}
        \right)
        \Xi^{(t)}
        \right\|_{D_{\mathcal H}}
        \leq
        \alpha\eta C_1 q_t
        +
        \frac{\alpha C_2}{1-\lambda}M.
    \end{equation}

    Combining~\eqref{eq:weighted_disagreement_dynamics},~\eqref{eq:nominal_weighted_contraction}, and~\eqref{eq:weighted_perturbation_bound_memory} gives
    \begin{equation*}
        e_{t+1}
        \leq
        \rho_0e_t
        +
        \alpha\eta C_1q_t
        +
        \frac{\alpha C_2}{1-\lambda}M.
    \end{equation*}
    By the definition of $q_t$,
    \begin{equation*}
        q_{t+1}
        =
        e_{t+1}
        +
        \lambda q_t.
    \end{equation*}
    Therefore,
    \begin{equation}
        \label{eq:augmented_comparison_system}
        \begin{bmatrix}
            e_{t+1}
            \\
            q_{t+1}
        \end{bmatrix}
        \preceq
        A_{\lambda,\eta}
        \begin{bmatrix}
            e_t
            \\
            q_t
        \end{bmatrix}
        +
        \frac{\alpha C_2}{1-\lambda}
        M
        \begin{bmatrix}
            1
            \\
            1
        \end{bmatrix},
    \end{equation}
    where
    \begin{equation*}
        A_{\lambda,\eta}
        =
        \begin{bmatrix}
            \rho_0
             &
            \alpha\eta C_1
            \\
            \rho_0
             &
            \lambda+\alpha\eta C_1
        \end{bmatrix}.
    \end{equation*}

    The characteristic polynomial of $A_{\lambda,\eta}$ is
    \begin{equation*}
        \chi(z)
        =
        z^2
        -
        \left(
        \rho_0+\lambda+\alpha\eta C_1
        \right)z
        +
        \rho_0\lambda.
    \end{equation*}
    Since $\rho_0\in[0,1)$ and $\lambda\in[0,1)$, condition~\eqref{eq:augmented_stability_condition} implies
    \begin{equation*}
        1
        -
        \operatorname{tr}(A_{\lambda,\eta})
        +
        \det(A_{\lambda,\eta})
        =
        (1-\rho_0)(1-\lambda)
        -
        \alpha\eta C_1
        >
        0,
    \end{equation*}
    together with
    \begin{equation*}
        1-\det(A_{\lambda,\eta})
        =
        1-\rho_0\lambda
        >
        0.
    \end{equation*}
    The remaining second-order Schur condition is automatically satisfied because
    \begin{equation*}
        1
        +
        \operatorname{tr}(A_{\lambda,\eta})
        +
        \det(A_{\lambda,\eta})
        >
        0.
    \end{equation*}
    Hence,
    \begin{equation*}
        \rho(A_{\lambda,\eta})
        <
        1.
    \end{equation*}

    Since $A_{\lambda,\eta}$ is nonnegative and Schur stable, iterating~\eqref{eq:augmented_comparison_system} gives
    \begin{equation*}
        \begin{bmatrix}
            e_t
            \\
            q_t
        \end{bmatrix}
        \preceq
        A_{\lambda,\eta}^t
        \begin{bmatrix}
            e_0
            \\
            e_0
        \end{bmatrix}
        +
        \frac{\alpha C_2}{1-\lambda}
        M
        \sum_{s=0}^{t-1}
        A_{\lambda,\eta}^s
        \begin{bmatrix}
            1
            \\
            1
        \end{bmatrix}.
    \end{equation*}
    Because $\rho(A_{\lambda,\eta})<1$, there exist constants $\kappa\geq1$ and $\rho\in(0,1)$ such that
    \begin{equation*}
        \|A_{\lambda,\eta}^t\|
        \leq
        \kappa\rho^t.
    \end{equation*}
    Moreover,
    \begin{equation*}
        \det
        \left(
        I-A_{\lambda,\eta}
        \right)
        =
        (1-\rho_0)(1-\lambda)
        -
        \alpha\eta C_1.
    \end{equation*}
    A direct calculation gives
    \begin{equation*}
        \begin{bmatrix}
            1 & 0
        \end{bmatrix}
        \left(
        I-A_{\lambda,\eta}
        \right)^{-1}
        \begin{bmatrix}
            1
            \\
            1
        \end{bmatrix}
        =
        \frac{
            1-\lambda
        }{
            (1-\rho_0)(1-\lambda)
            -
            \alpha\eta C_1
        }.
    \end{equation*}
    Therefore,
    \begin{equation*}
        e_t
        \leq
        \kappa\rho^t e_0
        +
        \frac{
            \alpha C_2
        }{
            (1-\rho_0)(1-\lambda)
            -
            \alpha\eta C_1
        }
        M.
    \end{equation*}
    Thus, the honest-node disagreement converges to a bounded neighborhood of the consensus manifold.
\end{proof}

\begin{corollary}
    \label{cor:separated_byzantine_consensus}
    Suppose that Assumptions~\ref{assump:majority_honest},~\ref{assump:honest_connected}, and~\ref{assump:bounded_byzantine} hold, and that $\lambda\in[0,1)$.
    Assume that
    \begin{equation*}
        \left\|
        x_{j\to i}^{(t)}
        -
        \bar{x}_{\pi,\mathcal H}^{(t)}
        \right\|_\infty
        \geq
        \frac{1}{\eta d_{\min}^{\mathcal H}},
        \qquad
        \forall t\geq0,\
        i\in\mathcal H,\
        j\in\mathcal N_i\cap\mathcal B,
    \end{equation*}
    where
    \begin{equation*}
        \bar{x}_{\pi,\mathcal H}^{(t)}
        \coloneqq
        \sum_{i\in\mathcal H}
        \pi_i x_i^{(t)}.
    \end{equation*}
    If $\alpha\in(0,1)$ and $\eta>0$ satisfy~\eqref{eq:augmented_stability_condition}, then there exist constants $\kappa\geq1$ and $\rho\in(0,1)$ such that, for all $t\geq0$,
    \begin{equation*}
        \left\|
        E_{\mathcal H}^{(t)}
        \right\|_{D_{\mathcal H}}
        \leq
        \kappa\rho^t
        \left\|
        E_{\mathcal H}^{(0)}
        \right\|_{D_{\mathcal H}}.
    \end{equation*}
    Consequently, the honest nodes asymptotically reach consensus with linear convergence.
\end{corollary}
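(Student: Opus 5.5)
The plan is to rerun the proof of Theorem~\ref{thm:bounded_byzantine_ISS} and show that, under the separation hypothesis, the constant forcing term vanishes. The only place the constant $C_2$ enters is through the reference-state construction. There, each Byzantine message block that lies closer than $1/(\eta d_{\min}^{\mathcal H})$ to $\bar{x}_{\pi,\mathcal H}^{(s)}$ in $\|\cdot\|_\infty$ has to be pushed outward. Under the corollary's hypothesis no such block exists for any $s\ge0$, so we can take $\widehat x_{k\to i}^{(s)}=x_{k\to i}^{(s)}$ for every $(k,i)\in\mathcal E_{\mathcal B\to\mathcal H}$ and only replace the honest components by $\bar{x}_{\pi,\mathcal H}^{(s)}$. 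Then \eqref{eq:reference_state_distance} sharpens to
\begin{equation*}
\|\Xi^{(s)}-\widehat\Xi^{(s)}\|=\|E_{\mathcal H}^{(s)}\|.
\end{equation*}
This holds because the honest blocks of the difference are exactly $x_i^{(s)}-\bar{x}_{\pi,\mathcal H}^{(s)}$, which are the blocks of $E_{\mathcal H}^{(s)}$.

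Next, I will check that the reference weights are still ideal. Since all honest reference states coincide and Assumption~\ref{assump:majority_honest} gives an honest strict majority, the coordinate-wise median equals $\bar{x}_{\pi,\mathcal H}^{(s)}$. Hence honest reference losses are zero, and Byzantine reference losses are at least $1/(\eta d_{\min}^{\mathcal H})$ at every time. This gives $\widehat L_{ik}^{(t)}\ge 1/(\eta d_i^{\mathcal H})$ while $\widehat L_{ij}^{(t)}=0$ for honest $j$. By Proposition~\ref{prop:sparsemax}, sparsemax applied to scores $0$ on the $d_i^{\mathcal H}$ honest entries gives threshold $\tau=-1/d_i^{\mathcal H}$. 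Every Byzantine score $-\eta \widehat L_{ik}^{(t)}\le -1/d_i^{\mathcal H}$ therefore gets weight zero, so $\widehat{\widetilde P}^{(t)}=\widetilde P^{\star}$. The boundary case of equality also gives zero weight, which is why the non-strict inequality in the hypothesis suffices.

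Applying Lemma~\ref{lem:weight_lipschitz} then yields \eqref{eq:P_difference_weighted_memory} without the second term. Consequently \eqref{eq:weighted_perturbation_bound_memory} becomes $\alpha\eta C_1 q_t$, using \eqref{eq:global_state_bound} exactly as before. The comparison system \eqref{eq:augmented_comparison_system} becomes homogeneous, $[e_{t+1};q_{t+1}]\preceq A_{\lambda,\eta}[e_t;q_t]$, with $q_0=e_0$. The Schur-stability argument under \eqref{eq:augmented_stability_condition} carries over verbatim. Since $A_{\lambda,\eta}$ is nonnegative, iterating preserves the componentwise inequality, so $e_t\le \|A_{\lambda,\eta}^t\|\,\|[e_0;e_0]\|\le \sqrt2\,\kappa\rho^t e_0$. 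Absorbing $\sqrt2$ into $\kappa$ gives the claim, and linear convergence to consensus follows because $E_{\mathcal H}^{(t)}\to0$.

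The main obstacle is the exact-zero step for the reference weights. The sparsemax threshold claim needs care when there are several Byzantine neighbors, and it must also cover the equality case. The plan is to verify directly that with support equal to the honest neighbors, $\tau=-1/d_i^{\mathcal H}$ satisfies the normalization condition of Proposition~\ref{prop:sparsemax}, and that all Byzantine scores are $\le\tau$. Uniqueness of the threshold then pins the weights.
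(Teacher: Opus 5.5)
Your proposal is correct and follows the paper's proof: under the separation hypothesis you keep every Byzantine reference message unchanged, so $\|\Xi^{(s)}-\widehat\Xi^{(s)}\|=\|E_{\mathcal H}^{(s)}\|$ and the $C_2$ residual disappears, and the homogeneous comparison system with the Schur-stable matrix $A_{\lambda,\eta}$ then gives the linear rate. Your direct check that $\tau=-1/d_i^{\mathcal H}$ is the sparsemax threshold (including the equality case) and your absorption of the factor $\sqrt{2}$ from $\|[e_0;e_0]\|$ into $\kappa$ are details the paper leaves implicit, and you handle both correctly.
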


\begin{proof}
    Under the separation condition, the Byzantine reference messages constructed in the proof of Theorem~\ref{thm:bounded_byzantine_ISS} can be chosen as
    \begin{equation*}
        \widehat{x}_{j\to i}^{(s)}
        =
        x_{j\to i}^{(s)},
        \qquad
        (j,i)\in\mathcal E_{\mathcal B\to\mathcal H},
    \end{equation*}
    for every $s\geq0$.
    Hence,
    \begin{equation*}
        \left\|
        \Xi^{(s)}
        -
        \widehat\Xi^{(s)}
        \right\|
        =
        \left\|
        E_{\mathcal H}^{(s)}
        \right\|,
    \end{equation*}
    and the additive Byzantine residual in the perturbation bound of Theorem~\ref{thm:bounded_byzantine_ISS} vanishes.

    Following the same augmented-state argument as in the proof of Theorem~\ref{thm:bounded_byzantine_ISS}, the honest-node disagreement is therefore governed by the corresponding homogeneous comparison system.
    Condition~\eqref{eq:augmented_stability_condition} ensures that its comparison matrix $A_{\lambda,\eta}$ is Schur stable.
    Hence, there exist constants $\kappa\geq1$ and $\rho\in(0,1)$ such that
    \begin{equation*}
        \left\|
        E_{\mathcal H}^{(t)}
        \right\|_{D_{\mathcal H}}
        \leq
        \kappa\rho^t
        \left\|
        E_{\mathcal H}^{(0)}
        \right\|_{D_{\mathcal H}},
        \qquad
        \forall t\geq0.
    \end{equation*}
    This proves linear convergence of the honest-node disagreement to zero.
\end{proof}

\begin{remark}
    Proposition~\ref{prop:weights_at_consensus} and Corollary~\ref{cor:separated_byzantine_consensus} characterize two complementary effects of the forgetting mechanism.
    Proposition~\ref{prop:weights_at_consensus} shows that a persistent Byzantine separation satisfying
    \begin{equation*}
        \gamma
        >
        \frac{1-\lambda}{\eta d_{\min}^{\mathcal H}}
    \end{equation*}
    is sufficient for eventual Byzantine rejection as the honest states approach consensus.
    Corollary~\ref{cor:separated_byzantine_consensus} imposes the stronger pointwise condition
    \begin{equation*}
        \left\|
        x_{j\to i}^{(t)}
        -
        \bar{x}_{\pi,\mathcal H}^{(t)}
        \right\|_\infty
        \geq
        \frac{1}{\eta d_{\min}^{\mathcal H}},
        \qquad
        \forall i\in\mathcal H,\
        j\in\mathcal N_i\cap\mathcal B,
    \end{equation*}
    throughout the evolution, which eliminates the Byzantine residual in the transient analysis and yields linear convergence to exact consensus.
    When $\lambda=0$, the two separation thresholds coincide.
    For $\lambda>0$, historical accumulation allows smaller persistent Byzantine deviations to be identified asymptotically, while the stability condition~\eqref{eq:augmented_stability_condition} becomes more restrictive as $\lambda$ approaches one.
\end{remark}

\section{Performance Evaluation}
\label{sec:experiments}

This section evaluates the performance of the proposed algorithm through two complementary experiments.
The first experiment provides a comprehensive comparison across all considered methods in a setting applicable to every baseline.
The second experiment examines performance in a more challenging high-dimensional scenario with mixed Byzantine attacks, where only the applicable baselines are compared.\footnote{The source code for all examples is publicly available at \url{https://github.com/rui-huang-opt/arepc}.}

Across both settings, consensus accuracy is quantified using Root Mean Square Error (RMSE) and Drift from the Initial Average (DIA), defined as
\begin{equation*}
    \begin{aligned}
         & \mathrm{RMSE}^{(t)} = \sqrt{\frac{1}{|\mathcal{H}|} \sum_{i \in \mathcal{H}} \| x_i^{(t)} - \bar{x}_{\mathcal{H}}^{(t)} \|^2}, \\
         & \mathrm{DIA}^{(t)} = \| \bar{x}_{\mathcal{H}}^{(t)} - \bar{x}_{\mathcal{H}}^{(0)} \|.
    \end{aligned}
\end{equation*}

To facilitate the comparison of reputation dynamics across methods, we express the baseline methods under a unified iterative form:
\begin{equation*}
    x_i^{(t+1)} = (1-\alpha)x_i^{(t)} + \alpha \hat{x}_i^{(t)},
\end{equation*}
where $\hat{x}_i^{(t)}$ denotes the method-dependent update target for node $i$, constructed from its neighbors' information, and $\alpha$ is the step size. This form is exactly equivalent to the original update for some methods, while for others it is obtained through a minor reformulation that preserves the essential update mechanism.

\subsection{Comprehensive Comparison with Baselines}

We benchmark A-RepC against W-MSR~\cite{leblanc2013resilient}, ADRC~\cite{abbas2022resilient}, RepC~\cite{ramos2023discrete}, and WLA~\cite{hou2023resilient}.
W-MSR is extended to vector states coordinate-wise and is included only as a heuristic baseline, since no theoretical guarantee is available in this case. Although RepC was originally proposed for scalar states, it extends naturally to vector states because inter-state discrepancies are mapped to scalar trust scores.

We evaluate all methods in a multi-process environment using TCP-based communication implemented with ZeroMQ and Pyre.
The network consists of $50$ honest nodes and $10$ Byzantine nodes (Fig.~\ref{fig:large_scale_network_topology}), with node states in $\mathbb{R}^4$.
The initial states of the honest nodes are drawn uniformly from $[-100,100]^4$, while each Byzantine node is initialized in the same way and then keeps that value fixed throughout the experiment.
Unless otherwise specified, the baseline parameters follow their original formulations: $f=1$ for W-MSR and RepC, $\epsilon=0.001$ for RepC, and $\theta=0.001$ for WLA.
For A-RepC, we set $\eta=0.001$ and $\lambda=0.8$.

\begin{figure}[hbt!]
    \centering
    \includegraphics[width=0.45\textwidth]{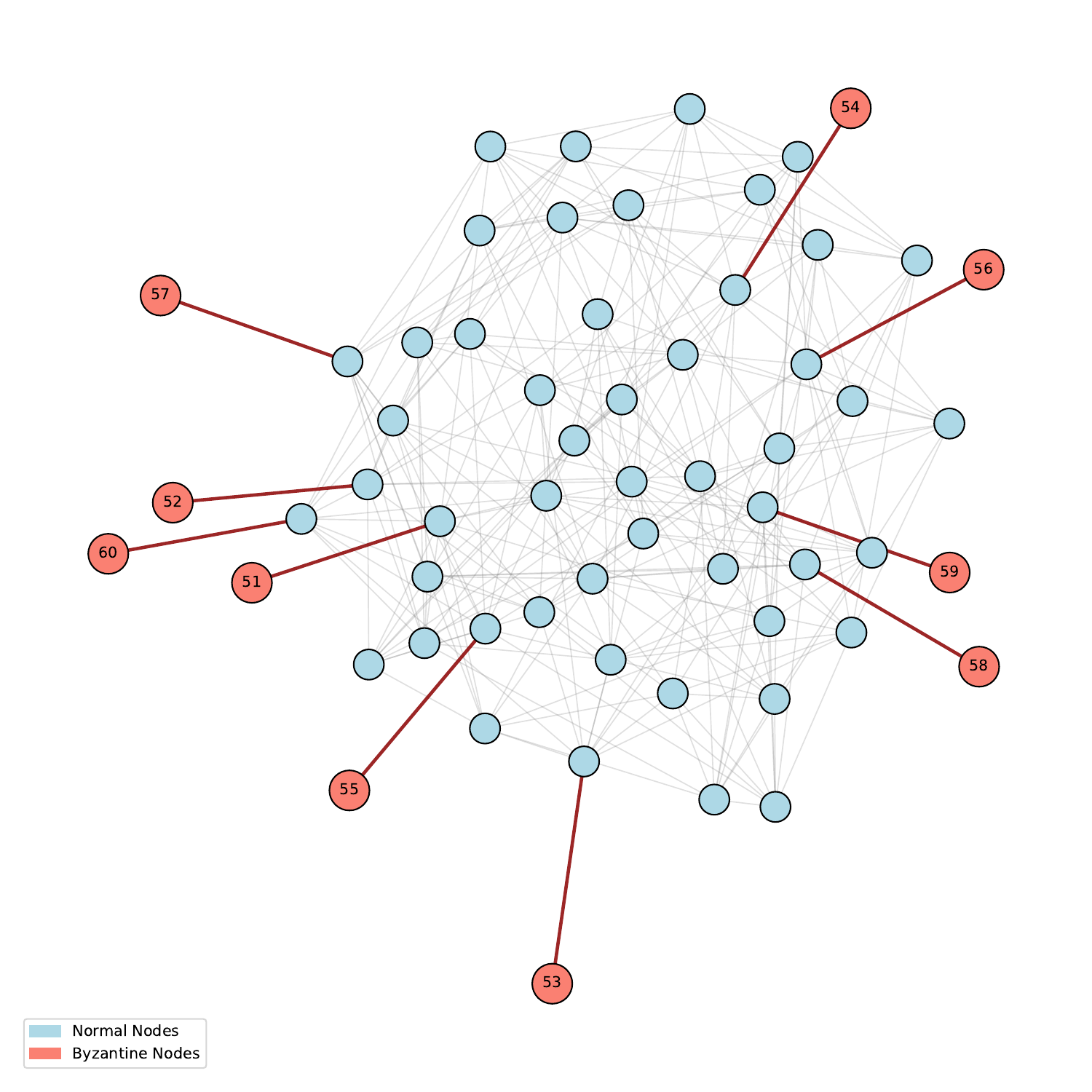}
    \caption{Illustration of the benchmark network topology.}
    \label{fig:large_scale_network_topology}
\end{figure}

Figure~\ref{fig:rmse_and_dia_large_scale} shows that all methods converge at comparable rates and remain close to the initial average.
These results indicate that A-RepC achieves consensus accuracy comparable to the baselines in this large-scale, low-dimensional setting.

\begin{figure}[hbt!]
    \centering
    \includegraphics[width=1.0\linewidth]{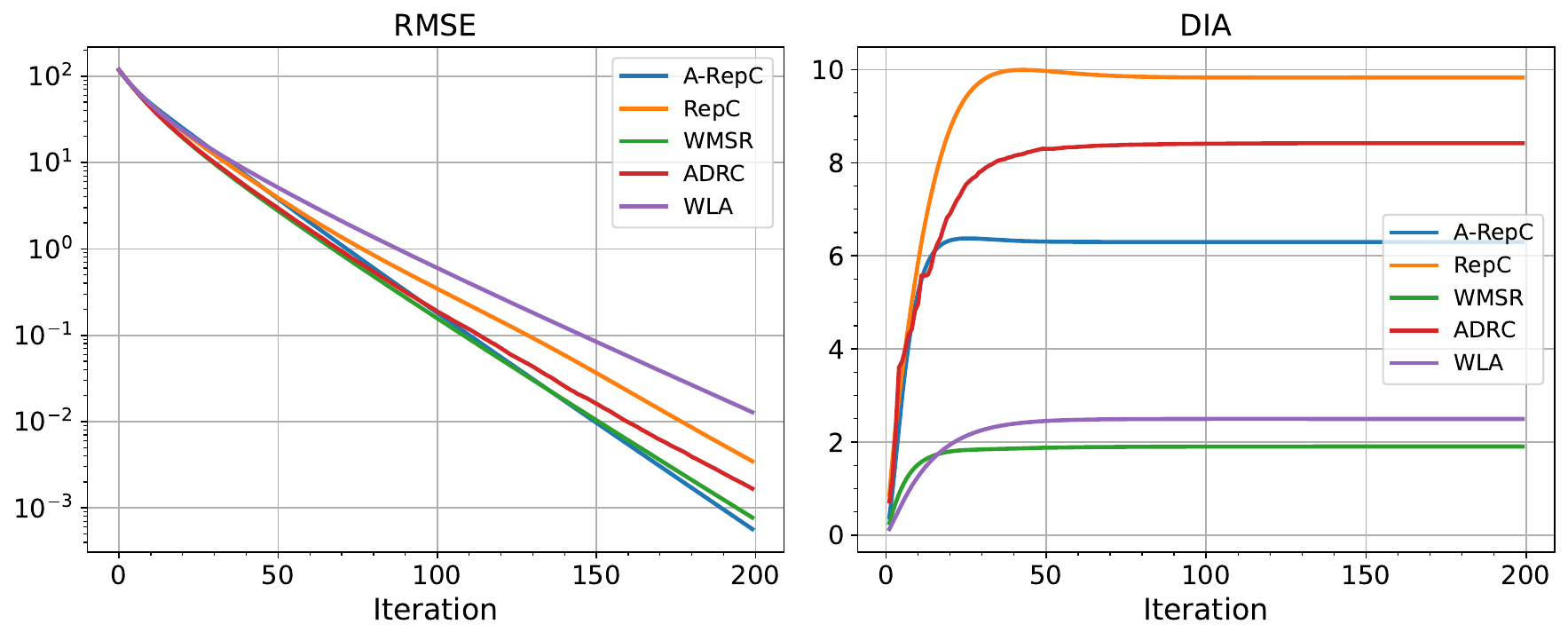}
    \caption{RMSE and DIA evolutions of different methods under fixed-value Byzantine attacks in the large-scale low-dimensional benchmark.}
    \label{fig:rmse_and_dia_large_scale}
\end{figure}

Although the methods exhibit similar overall convergence behavior, their reputation dynamics differ markedly (Fig.~\ref{fig:reputation_evolutions_large_scale}).
A-RepC explicitly identifies adversarial behavior by driving Byzantine reputations to zero while maintaining nearly uniform weights over honest neighbors.
RepC also shows some identification capability, but its reputation evolution is strongly affected by the preset parameter $f$. Consequently, RepC performs reasonably when the true number of adversaries matches $f$, but it may still distort reputations unnecessarily even when no Byzantine neighbors are present, revealing its reliance on prior knowledge.
WLA can also suppress Byzantine neighbors, but unlike A-RepC it does not assign nearly uniform weights to honest neighbors.
Instead, its weights over honest neighbors remain uneven because WLA retains the full history of accumulated losses, so discrepancies arising from initial state differences continue to influence the weight evolution.
In contrast, W-MSR and ADRC rely solely on robust aggregation and do not explicitly identify Byzantine nodes.

\begin{figure}[hbt!]
    \centering

    \begin{minipage}{\linewidth}
        \centering
        \includegraphics[width=\linewidth]{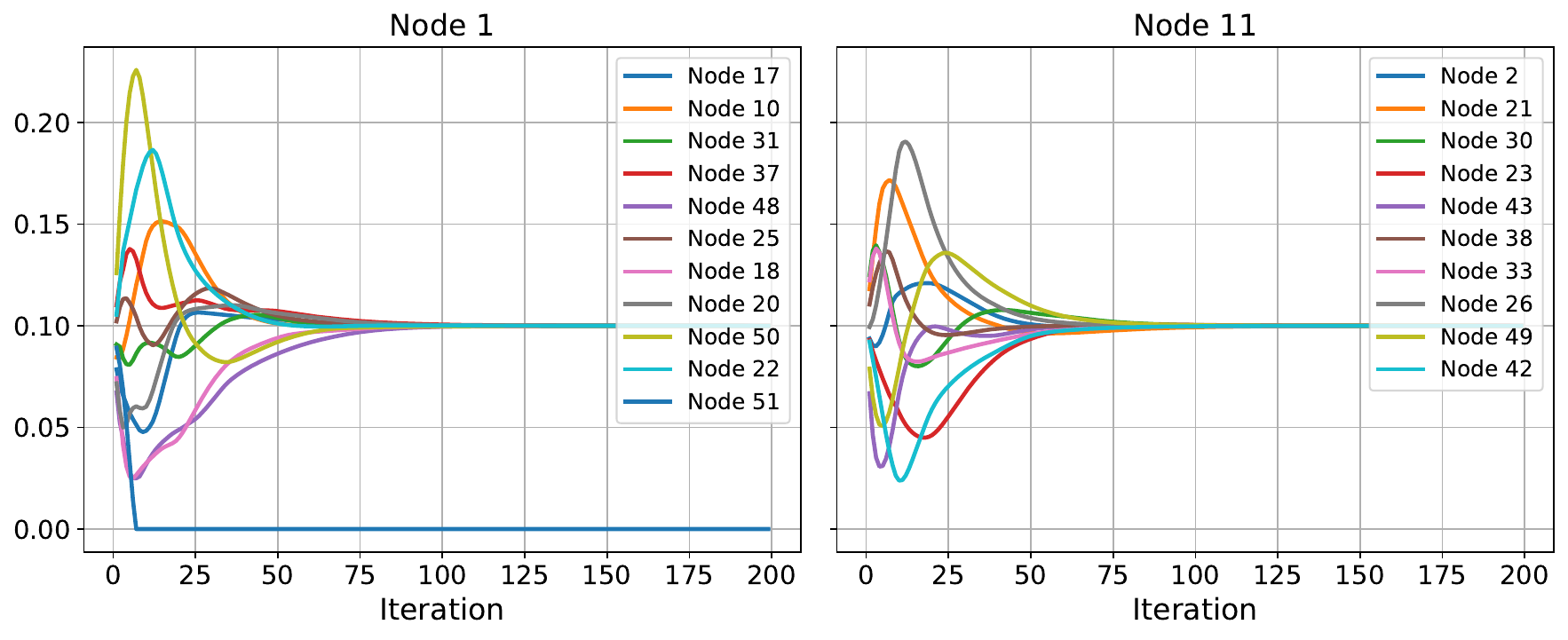}

        (a) A-RepC
    \end{minipage}

    \vspace{1mm}

    \begin{minipage}{\linewidth}
        \centering
        \includegraphics[width=\linewidth]{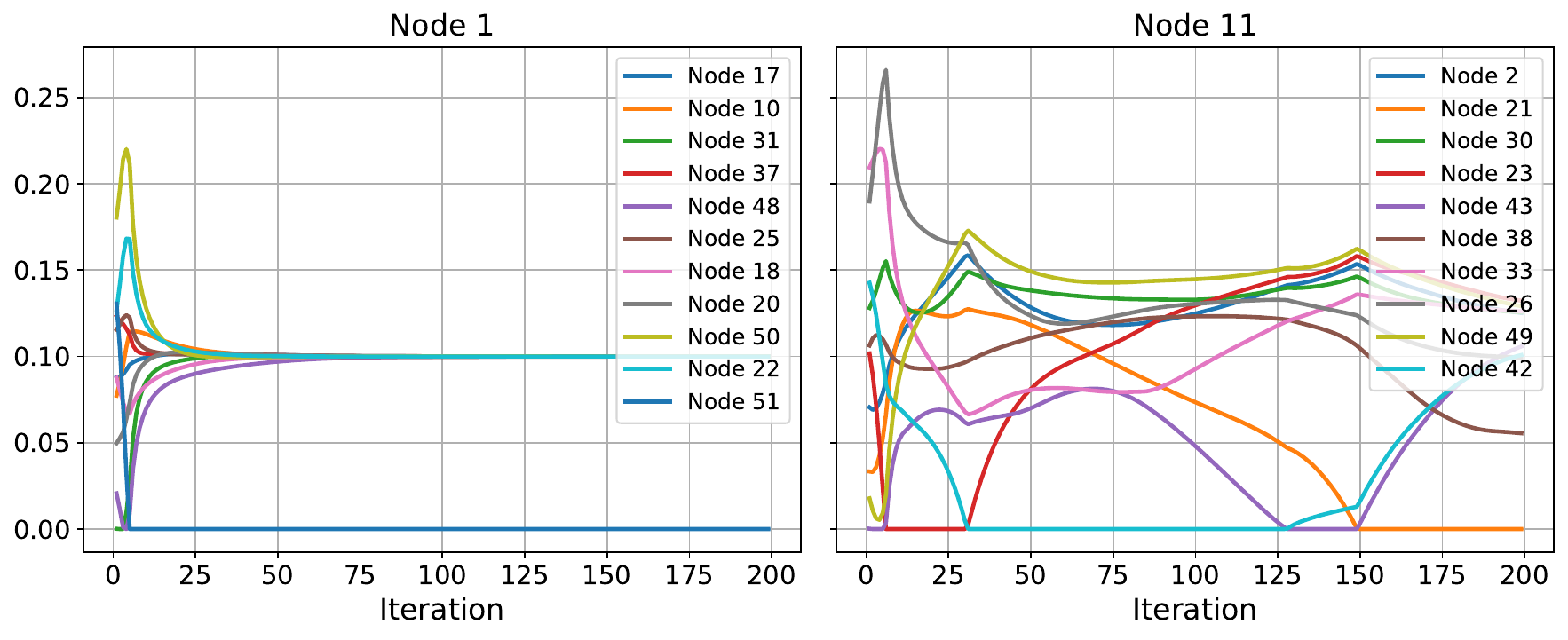}

        (b) RepC
    \end{minipage}

    \vspace{1mm}

    \begin{minipage}{\linewidth}
        \centering
        \includegraphics[width=\linewidth]{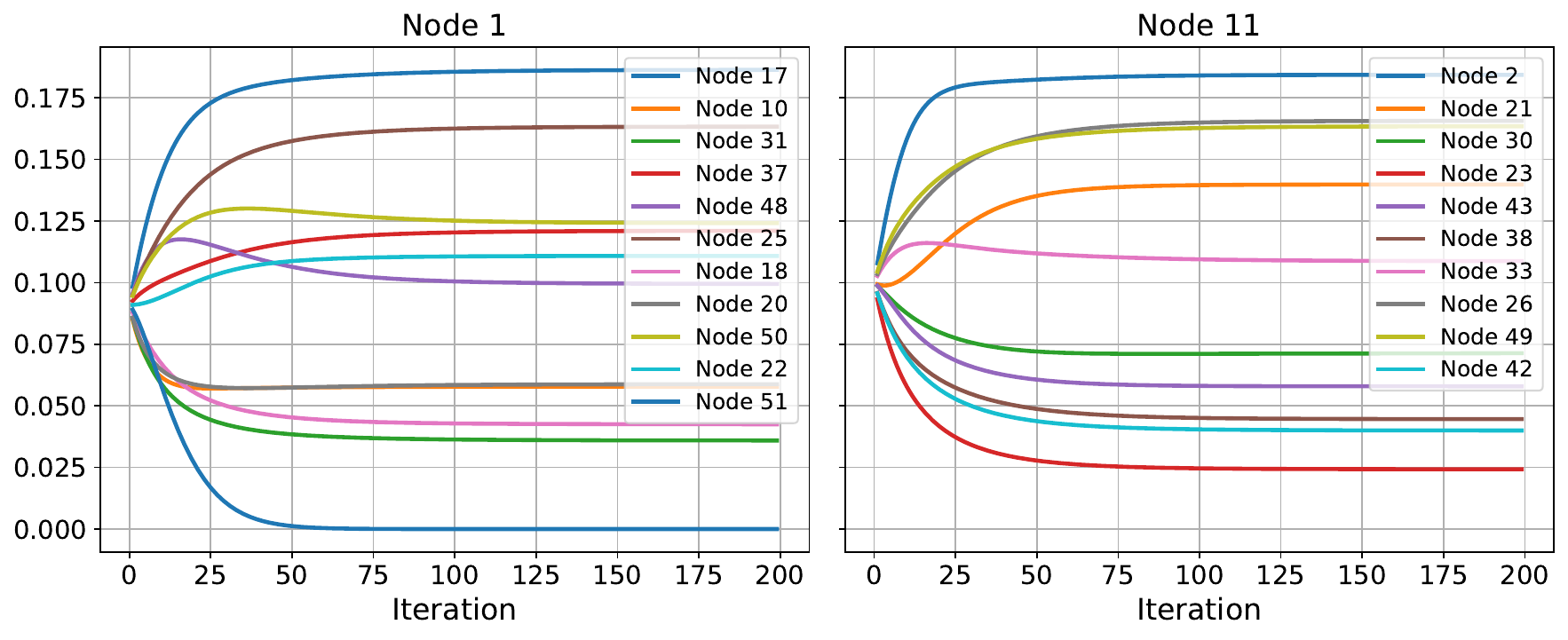}

        (c) WLA
    \end{minipage}

    \caption{Reputation evolutions under A-RepC, RepC, and WLA in the large-scale low-dimensional benchmark with random Byzantine attacks.}
    \label{fig:reputation_evolutions_large_scale}
\end{figure}

\subsection{Robustness under High-Dimensional Mixed Attacks}

We evaluate the applicable methods on a physical Wi-Fi network consisting of $8$ Raspberry Pi devices, one laptop, and one desktop, with node states in $\mathbb{R}^{20}$.
The network topology is shown in Fig.~\ref{fig:small_scale_network_topology}.
Honest nodes initialize their states by sampling uniformly from $[-100,100]^{20}$.

To evaluate robustness under mixed attacks, nodes $8$ and $10$ act as random Byzantine nodes and transmit vectors sampled uniformly from $[-100,100]^{20}$ at every iteration.
For the remaining Byzantine nodes, after receiving neighbor messages $\{x_{j\to i}^{(t)}\}_{j\in\mathcal N_i}$ at iteration $t$, the message sent to neighbor $j$ at the next transmission is
\begin{equation*}
    x_{i\to j}^{(t+1)}
    =
    \begin{cases}
        x_{j\to i}^{(t)} + 100v, & t\bmod 10=0,      \\
        x_{j\to i}^{(t)},        & \text{otherwise}.
    \end{cases}
\end{equation*}
where $v=(1,0,\dots,0)^\top \in \mathbb{R}^{20}$.

\begin{figure}[hbt!]
    \centering
    \includegraphics[width=0.45\textwidth]{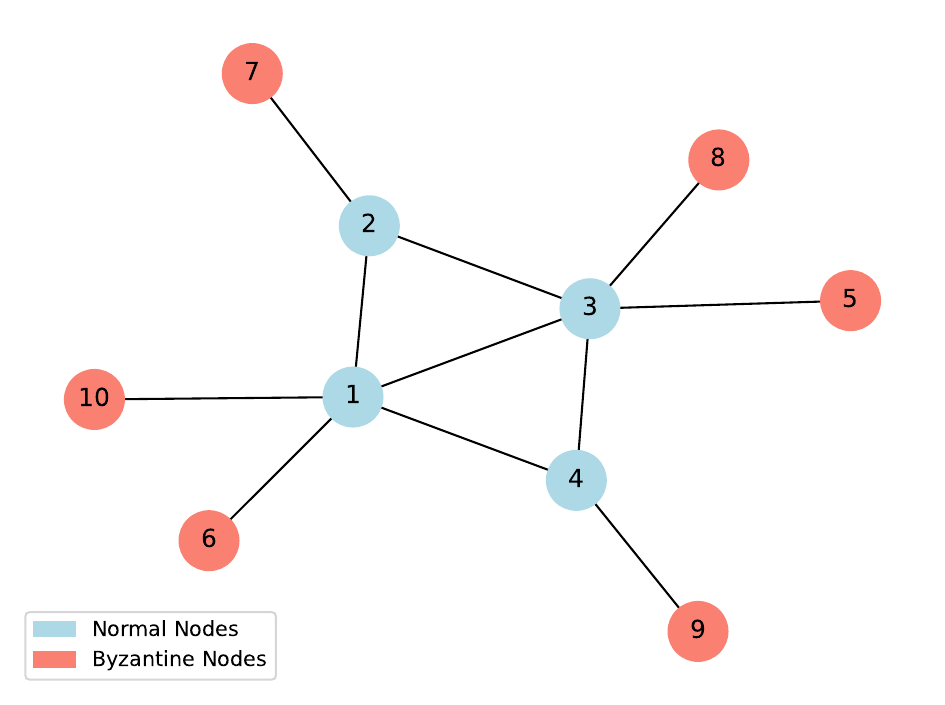}
    \caption{Topology of the physical network used in the small-scale high-dimensional benchmark.}
    \label{fig:small_scale_network_topology}
\end{figure}

ADRC is excluded because centerpoint-based methods are impractical in high dimensions \cite{abbas2022resilient}.
In particular, the two approximate high-dimensional centerpoint algorithms in \cite{abbas2022resilient} require prohibitively large neighborhood sizes: Iterated Tverberg~\cite{MILLER2010647} needs at least $1765$ neighbors for a single iteration, and Iterated Radon~\cite{clarkson1993approximating} requires at least $22$ neighbors.
Since our network does not provide such connectivity, ADRC is infeasible in this setting.

We set $\alpha=0.3$, $\eta=0.002$, and $\lambda=29/30$ for A-RepC; $f=1$ for W-MSR and RepC; $\epsilon=0.001$ for RepC; and $\theta=0.001$ for WLA.

Figure~\ref{fig:rmse_and_dia_small_scale} shows that A-RepC converges steadily to a consensus value that remains close to the initial average.
In contrast, W-MSR, RepC, and WLA all diverge.
For W-MSR and RepC, this is because both methods rely on the preset bound $f=1$, whereas node~$1$ has two Byzantine neighbors, violating this assumption.
WLA fails for a different reason: because its loss function uses the node's own state as the reference, an adversary can readily infer how to construct messages that achieve a high instantaneous score after receiving that state.
This makes WLA particularly vulnerable to periodic relay-based attacks.
Moreover, since WLA leverages the full history of past information, the influence of such strategically crafted messages can accumulate over time, further amplifying the effect of intermittent perturbations and ultimately preventing effective defense against malicious information.

\begin{figure}[hbt!]
    \centering
    \includegraphics[width=1.0\linewidth]{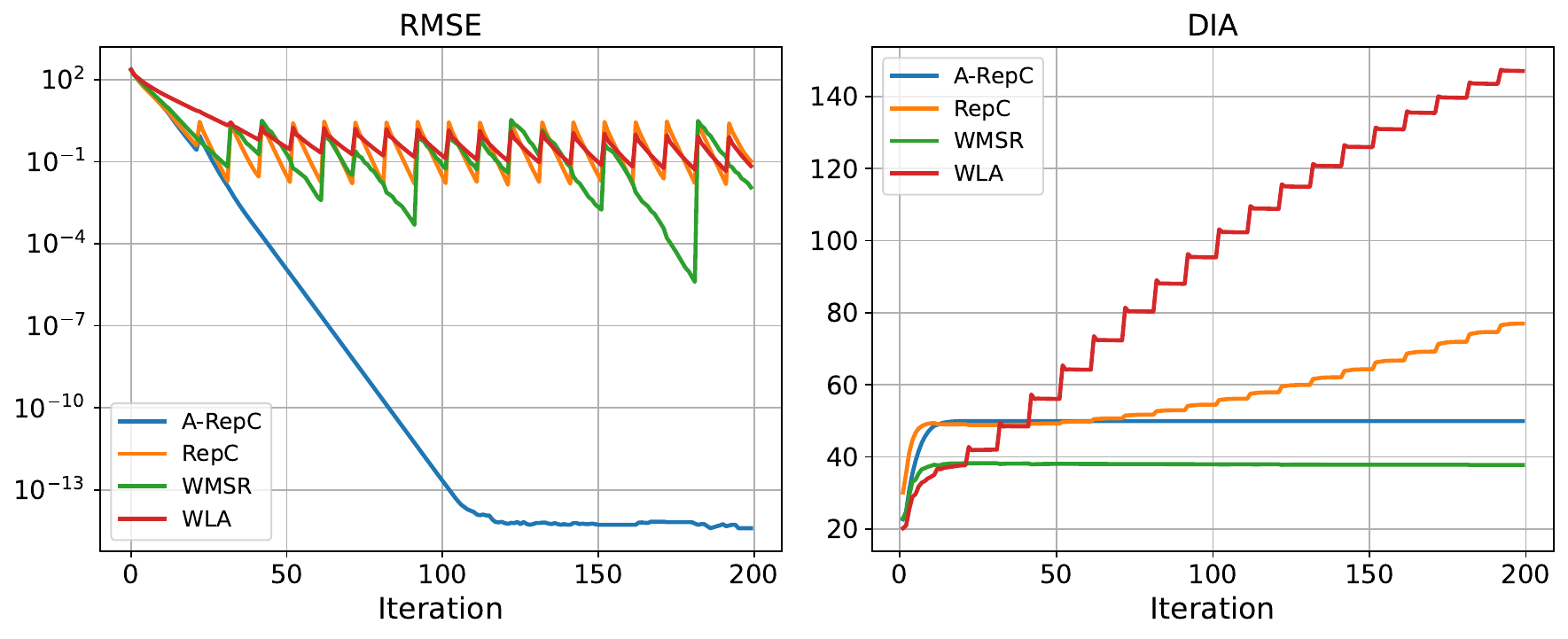}
    \caption{RMSE and DIA evolutions of A-RepC, W-MSR, RepC, and WLA in the small-scale network with mixed Byzantine attacks.}
    \label{fig:rmse_and_dia_small_scale}
\end{figure}

Figure~\ref{fig:reputation_evolutions_small_scale_random} further shows that A-RepC successfully eliminates Byzantine neighbors by assigning them zero weight, while distributing weights approximately uniformly among honest neighbors.
By contrast, RepC performs poorly even for nodes~$2$ and $4$, although their numbers of Byzantine neighbors satisfy the assumed bound $f=1$, because RepC is memoryless and thus cannot accumulate evidence across iterations, making it particularly vulnerable to intermittent attacks.
The weight allocation of WLA is also consistent with the preceding analysis: the intermittently attacking Byzantine nodes~$5$, $6$, $7$, and $9$ receive the largest weights during the initial stage, and these weights decrease only slowly afterward.

\begin{figure}[hbt!]
    \centering

    \begin{minipage}{\linewidth}
        \centering
        \includegraphics[width=\linewidth]{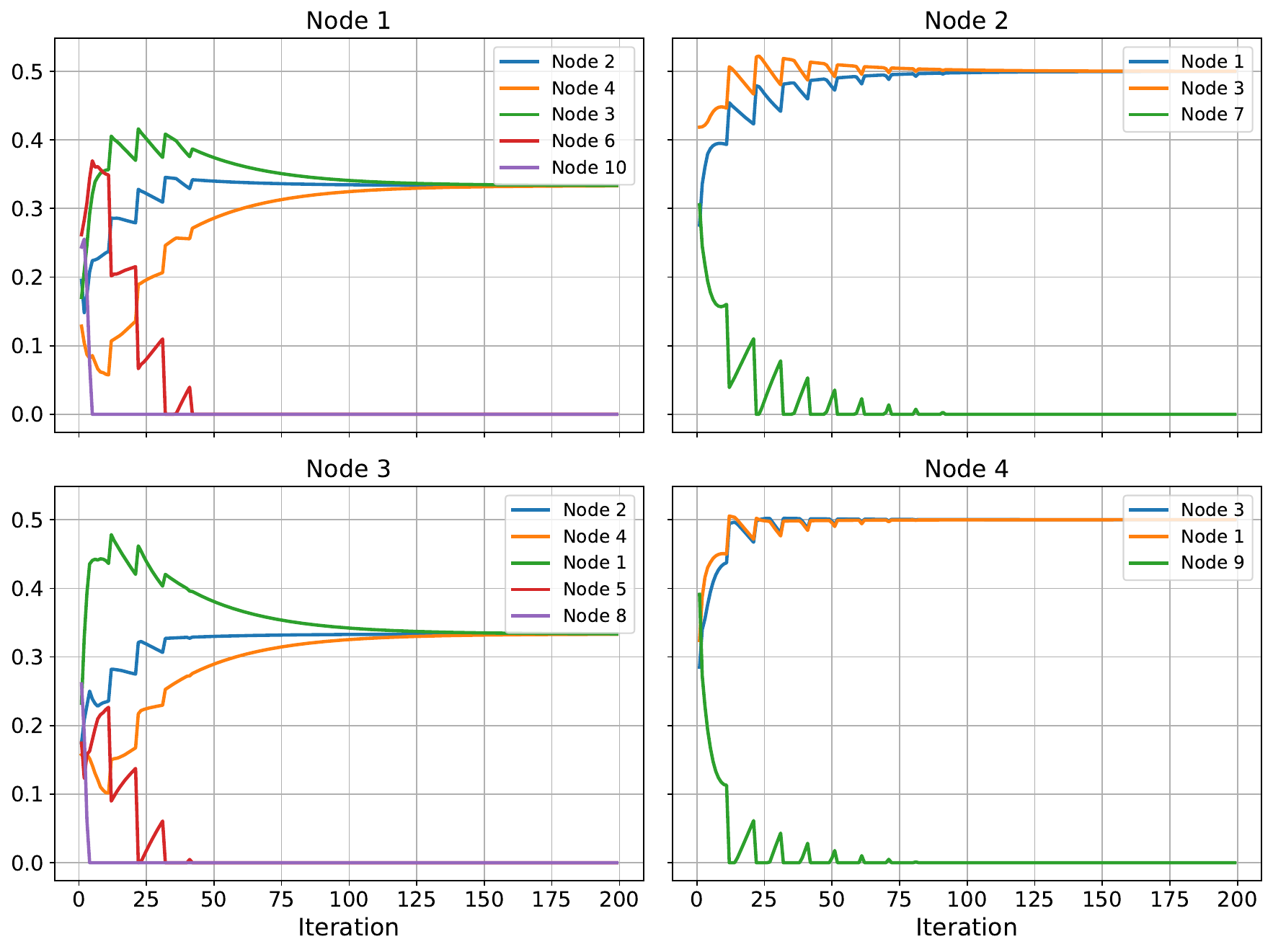}

        (a) A-RepC
    \end{minipage}

    \vspace{1mm}

    \begin{minipage}{\linewidth}
        \centering
        \includegraphics[width=\linewidth]{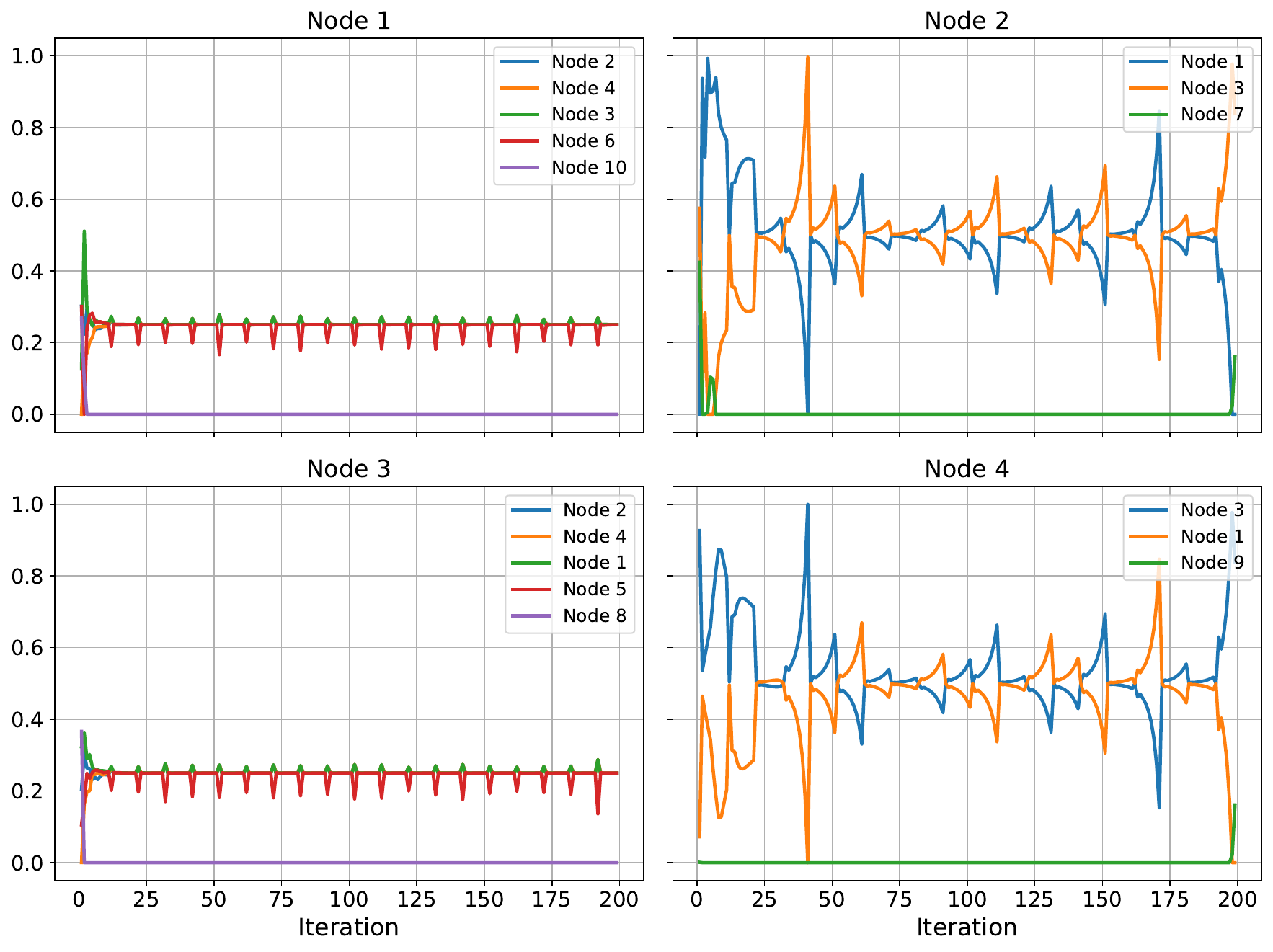}

        (b) RepC
    \end{minipage}

    \vspace{1mm}

    \begin{minipage}{\linewidth}
        \centering
        \includegraphics[width=\linewidth]{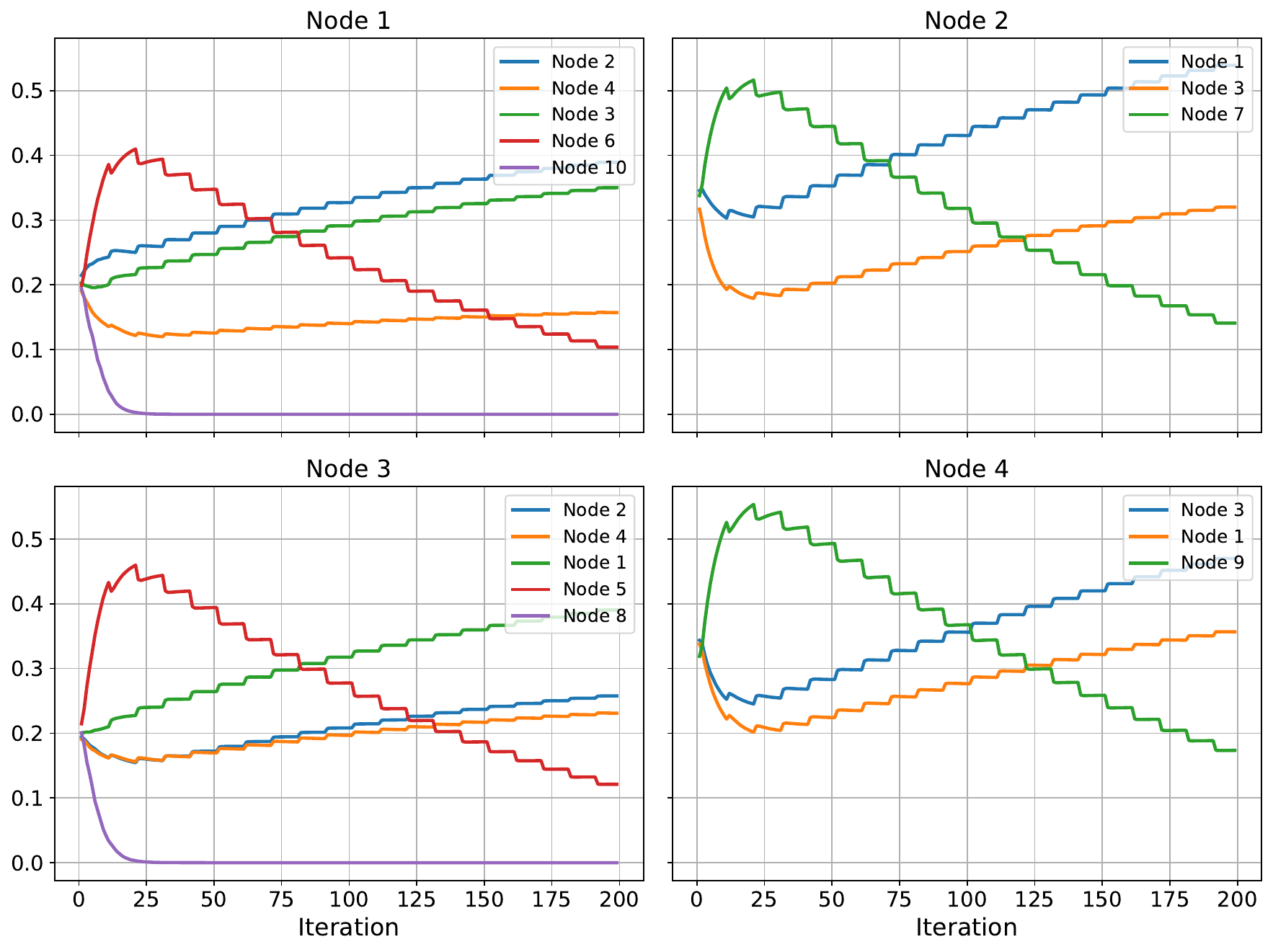}

        (c) WLA
    \end{minipage}

    \caption{Reputation dynamics of A-RepC, RepC, and WLA in the small-scale network with mixed Byzantine attacks.}
    \label{fig:reputation_evolutions_small_scale_random}
\end{figure}

\section{Conclusion}
\label{sec:conclusion}

This paper proposed A-RepC, a Byzantine-resilient consensus algorithm that jointly addresses Byzantine behavior discrimination and resilient consensus among honest agents.
By embedding a learning-inspired reputation mechanism into the consensus loop, A-RepC enables agents to infer neighbor trustworthiness from outlier-robust local losses and historical information, and to use the resulting reputations to suppress adversarial influence in subsequent updates.
Theoretical analysis established rigorous resilience guarantees, and extensive distributed-system experiments showed that A-RepC achieves more effective Byzantine identification and more reliable consensus than existing baselines, especially in high-dimensional settings.

Future work will focus on tightening the theoretical guarantees and extending the analysis to time-varying and directed networks.

\bibliographystyle{IEEEtran}
\bibliography{main}

\end{document}